\documentclass[12]{amsart}
\usepackage{amsmath,amssymb}
\usepackage{amsthm,color,enumerate,comment,centernot,enumitem,url,cite}
\usepackage{graphicx,relsize,bm}
\usepackage{mathtools}
\usepackage{array}
\usepackage{enumitem}
\setenumerate[0]{label=(\alph*)}

\makeatletter
\newcommand{\tpmod}[1]{{\@displayfalse\pmod{#1}}}
\makeatother

\newcommand{\ord}{\operatorname{ord}}

\newtheorem{thm}{Theorem}[section]
\newtheorem{lemma}[thm]{Lemma}

\newtheorem{prop}[thm]{Proposition}

\theoremstyle{remark}

\theoremstyle{definition}

\def\FF {{\mathcal F}}

\def\Z {{\mathbb Z}}

\def\NN {{\mathcal N}}

\def\Q {{\mathbb Q}}

\def\C {{\mathcal C}}

\def\D {{\mathcal D}}

\def\F {{\mathbb F}}
\def\D {{\mathcal D}}
\def\Z {{\mathbb Z}}
\def\Q {{\mathbb Q}}
\def\C {{\mathbb C}}

\def\CC {{\mathcal C}}

\makeatletter
\@namedef{subjclassname@2020}{%
  \textup{2020} Mathematics Subject Classification}
\makeatother

\def\red#1 {\textcolor{red}{#1 }}
\def\blue#1 {\textcolor{blue}{#1 }}

\numberwithin{equation}{section}

\def\Z {{\mathbb Z}}

\textwidth=13.5cm

\begin{document}

\title[A new condition for $k$-Wall-Sun-Sun primes]{A new condition for $k$-Wall-Sun-Sun primes}

%\author{Joshua Harrington}
%\address{Department of Mathematics, Cedar Crest College, Allentown, Pennsylvania, USA}
%\email[Joshua Harrington]{Joshua.Harrington@cedarcrest.edu}

\author{Lenny Jones}
\address{Professor Emeritus, Department of Mathematics, Shippensburg University, Shippensburg, Pennsylvania 17257, USA}
\email[Lenny~Jones]{doctorlennyjones@gmail.com}

%\author{Daniel White}
%\address{Department of Mathematics, Bryn Mawr College, Bryn Mawr, Pennsylvania 19010-2899, USA}
%\email[Daniel~White]{dfwhite@brynmawr.edu}
\date{\today}

\begin{abstract} Let $k\ge 1$ be an integer, and let $(U_n)$
be the Lucas sequence of the first kind defined by
\begin{equation*}\label{Eq:Lucas}
U_0=0,\quad U_1=1\quad \mbox{and} \quad U_n=kU_{n-1}+U_{n-2} \quad \mbox{ for $n\ge 2$}.
\end{equation*}
It is well known that $(U_n)$ is periodic modulo any integer $m\ge 2$, and we let $\pi(m)$
denote the length of this period. A prime $p$ is called a \emph{$k$-Wall-Sun-Sun prime} if $\pi(p^2)=\pi(p)$.

  Let $f(x)\in {\mathbb Z}[x]$ be a monic polynomial of degree $N$ that is irreducible over ${\mathbb Q}$. We say $f(x)$ is \emph{monogenic} if  $\Theta=\{1,\theta,\theta^2,\ldots ,\theta^{N-1}\}$ is a basis for the ring of integers ${\mathbb Z}_K$ of $K={\mathbb Q}(\theta)$, where $f(\theta)=0$. If $\Theta$ is not a basis for ${\mathbb Z}_K$, we say that $f(x)$ is \emph{non-monogenic}.

 Suppose that $k\not \equiv 0 \pmod{4}$ and that ${\mathcal D}:=(k^2+4)/\gcd(2,k)^2$ is squarefree. We prove that $p$ is a $k$-Wall-Sun-Sun prime if and only if ${\mathcal F}_p(x)=x^{2p}-kx^p-1$ is non-monogenic. Furthermore, if $p$ is a prime divisor of $k^2+4$, then ${\mathcal F}_p(x)$ is monogenic.
 \end{abstract}

\subjclass[2020]{Primary 11R04, 11B39, Secondary 11R09, 12F05}
\keywords{$k$-Wall-Sun-Sun prime, monogenic} %, power-compositional}

\maketitle
\section{Introduction}\label{Section:Intro}
Let $k\ge 1$ be an integer, and let $(U_n):=(U_n(k,-1))$ be the Lucas sequence of the first kind defined by
\begin{equation}\label{Eq:Lucas}
U_0=0,\quad U_1=1\quad \mbox{and} \quad U_n=kU_{n-1}+U_{n-2} \quad \mbox{ for $n\ge 2$}.
\end{equation}
It is well known that $(U_n)$ is periodic modulo any integer $m\ge 2$, and we let $\pi(m):=\pi_k(m)$ denote the length of this period. A prime $p$ is called a \emph{$k$-Wall-Sun-Sun prime} if
\begin{equation}\label{Eq:kWSS}
\pi(p^2)=\pi(p).
 \end{equation} Note that $(U_n)$ is the Fibonacci sequence when $k=1$, and in this case, primes satisfying \eqref{Eq:kWSS} are simply called \emph{Wall-Sun-Sun primes}. For the Fibonacci sequence, D. D. Wall \cite{Wall} first asked in 1960 about the existence of primes satisfying \eqref{Eq:kWSS}. In 1992, the Sun brothers \cite{SunSun} showed that the first case of Fermat's Last Theorem for exponent $p$ fails only if $p$ satisfies \eqref{Eq:kWSS}. The question of whether any Wall-Sun-Sun primes exist is still unresolved, and as of December 2022, if $p$ is a Wall-Sun-Sun prime, then $p>2^{64}$ \cite{CDP,Wiki2}. However, the situation is quite different when $k\ge 2$ \cite{Wiki2}.

Several conditions are known to be equivalent to \eqref{Eq:kWSS}. For example, it is easy to see that $U_{\pi(p)}\equiv 0 \pmod{p^2}$ is one such condition. Another, less obvious, equivalent condition is $U_{p-\delta_p}\equiv 0 \pmod{p^2}$, where $\delta_p$ is the Legendre symbol $\left(\frac{k^2+4}{p}\right)$. For more information and proofs, see \cite{Bouazzaoui1,Bouazzaoui2,JonesEJM,Wiki2}.

It is the goal of this article to present a new condition equivalent to \eqref{Eq:kWSS} that is quite unlike any previously known condition. This new condition involves the concept of the monogenicity of a certain polynomial, which we now describe.
Suppose that $f(x)\in \Z[x]$ is a monic polynomial that is irreducible over $\Q$. Let $\Z_K$ be the ring of integers of $K=\Q(\theta)$, where $f(\theta)=0$. Then \cite{Cohen}
\begin{equation} \label{Eq:Dis-Dis}
\Delta(f)=\left[\Z_K:\Z[\theta]\right]^2\Delta(K),
\end{equation}
 where $\Delta(f)$ and $\Delta(K)$ denote, respectively, the discriminants over $\Q$ of $f(x)$ and the number field $K$.
 We define $f(x)$ to be \emph{monogenic} if  $\Theta=\{1,\theta,\theta^2,\ldots ,\theta^{\deg(f)-1}\}$ is a basis for $\Z_K$. If $\Theta$ fails to be a basis for $\Z_K$, we say that $f(x)$ is \emph{non-monogenic}.
Observe then, from \eqref{Eq:Dis-Dis}, that $f(x)$ is monogenic if and only if $\left[\Z_K:\Z[\theta]\right]=1$ or, equivalently, $\Delta(f)=\Delta(K)$.

  The main theorem of this article is as follows: 
  \begin{thm}\label{Thm:Main}
  Let $p$ be a prime. Let $k\ge 1$ be an integer such that $k\not \equiv 0 \pmod{4}$ and $\D$ is squarefree, where
  \begin{equation}\label{Eq:DD}
  {\mathcal D}:=(k^2+4)/\gcd(2,k)^2.
  \end{equation} Then
 $p$ is a $k$-Wall-Sun-Sun prime if and only if
 \[\mbox{the polynomial } \FF_p(x):=x^{2p}-kx^p-1 \mbox{ is non-monogenic.}\]
 Furthermore, if $p$ is a prime divisor of $k^2+4$, then ${\mathcal F}_p(x)$ is monogenic.
  \end{thm}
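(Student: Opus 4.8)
The plan is to decide, for each rational prime $q$, whether $q$ divides the index $[\Z_K:\Z[\theta]]$, where $\theta$ is a root of $\FF_p$ and $K=\Q(\theta)$; then $\FF_p$ is non-monogenic precisely when some $q$ divides this index. First I would record that $x^2-kx-1$ is irreducible over $\Q$ with roots $\alpha,\beta$ satisfying $\alpha+\beta=k$ and $\alpha\beta=-1$, and that $\FF_p(x)=(x^p-\alpha)(x^p-\beta)$ is irreducible over $\Q$ (so that monogenicity is meaningful); this I would get from the irreducibility of $x^2-kx-1$ together with a standard criterion for $x^p-\alpha$ over $\Q(\alpha)$. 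Since the roots of $\FF_p$ are the $p$-th roots of $\alpha$ and $\beta$ and $\FF_p'(x)=px^{p-1}(2x^p-k)$, a resultant computation gives
\[
\Delta(\FF_p)=\pm\,p^{2p}(k^2+4)^p .
\]
Hence the only primes that can divide the index are $p$ and the prime divisors of $k^2+4$, and I would dispose of each using Dedekind's criterion.

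For a prime $q\mid k^2+4$ with $q\ne p$ and $q$ odd, reduction gives $x^2-kx-1\equiv (x-c)^2 \pmod q$ with $2c\equiv k$, so $\FF_p\equiv (x^p-c)^2\pmod q$; since $q\ne p$ the polynomial $x^p-c$ is separable, so its radical is itself and the criterion reduces to the single condition $q^2\nmid (c^2-kc-1)$. Choosing the lift $2c\equiv k \pmod{q^2}$ yields $c^2-kc-1\equiv -(k^2+4)/4\pmod{q^2}$, so the condition holds exactly when $q$ divides $k^2+4$ to the first power. Squarefreeness of $\D$ guarantees this for every odd $q\mid k^2+4$, so no such $q$ divides the index. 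The prime $q=2$ occurs only when $k$ is even (whence $k\equiv 2\pmod 4$); there $\FF_p\equiv (x+1)^{2p}\pmod 2$ and the Dedekind test collapses to $\FF_p(-1)=k\not\equiv 0\pmod 4$, which is exactly the hypothesis $k\not\equiv 0\pmod 4$. Thus the index is governed entirely by the prime $p$.

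The heart of the proof is the analysis at $p$ when $p\nmid k^2+4$ (taken with $p$ odd; the case $p=2$, which can only arise for $k$ odd, I would check directly). Here $\FF_p\equiv (x^2-kx-1)^p\pmod p$, and Dedekind's criterion with lift $\tilde g\tilde h=(x^2-kx-1)^p$ and $F=(\FF_p-(x^2-kx-1)^p)/p$ shows that $p$ divides the index if and only if $F(\alpha)\equiv 0\pmod{\p}$ for a prime $\p$ of $\Q(\alpha)$ above $p$; as $p$ is unramified there, this means $v_{\p}(\FF_p(\alpha))\ge 2$. Writing $V_n:=\alpha^n+\beta^n$ and using $\alpha^2=k\alpha+1$, one finds the clean identity $\FF_p(\alpha)=(V_p-k)\alpha^p$, and since $\alpha$ is a unit this gives
\[
p\mid [\Z_K:\Z[\theta]] \iff V_p\equiv k \pmod{p^2}.
\]
It then remains to match this with the $k$-Wall--Sun--Sun condition, for which I would use its known equivalence with $U_{p-\delta_p}\equiv 0\pmod{p^2}$ together with $V_p=kU_p+2U_{p-1}$ and the Cassini relation $U_{n+1}U_{n-1}-U_n^2=(-1)^n$. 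Reducing these modulo $p^2$ produces a $2\times 2$ linear system in the Fermat-quotient parameters of $U_p$ and $U_{p-1}$ whose determinant is $k^2+4$; since $p\nmid k^2+4$, the system forces $V_p\equiv k\pmod{p^2}\iff U_{p-\delta_p}\equiv 0\pmod{p^2}$, i.e.\ $p$ is a $k$-Wall--Sun--Sun prime. Establishing this dictionary between the Dedekind output and the period condition is the step I expect to be the main obstacle, since it is where the new equivalence actually comes from.

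Finally, for the ``Furthermore'' statement suppose $p\mid k^2+4$. For odd $p$ the double root $a$ of $x^2-kx-1\pmod p$ satisfies $a^2\equiv -1$, which forces $p\equiv 1\pmod 4$; setting $\epsilon:=(k^2+4)/4$ (so $v_p(\epsilon)=1$ by squarefreeness of $\D$) and computing modulo $p^2$ gives $a^{2p}\equiv -1$ and, after simplification, $\FF_p(a)\equiv -\epsilon\pmod{p^2}$. Hence $v_p(\FF_p(a))=1$, so the Dedekind residue $\FF_p(a)/p$ is a $p$-unit and $p\nmid[\Z_K:\Z[\theta]]$; combined with the previous paragraph (all other prime divisors of $k^2+4$ are harmless), $\FF_p$ is monogenic. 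The small case $p=2\mid k^2+4$ (only when $k\equiv 2\pmod 4$) I would verify directly on $\FF_2(x)=x^4-kx^2-1$. This also keeps the biconditional valid for ramified $p$: such $p$ satisfy $\pi(p^2)=p\,\pi(p)$ and hence are never $k$-Wall--Sun--Sun primes, in agreement with the monogenicity just established.
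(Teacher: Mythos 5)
Your proposal is correct, and while it shares the paper's global skeleton --- compute $\Delta(\FF_p)=\pm p^{2p}(k^2+4)^p$, clear every prime $q\ne p$ using the squarefreeness of $\D$ and $4\nmid k$, and reduce the theorem to whether $p$ divides $[\Z_K:\Z[\theta]]$ --- the decisive step is handled by a genuinely different mechanism. The paper runs the local analysis through the Jakhar--Khanduja--Sangwan trinomial criterion (items \ref{JKS:I5}, \ref{JKS:I2}, \ref{JKS:I4} of Theorem \ref{Thm:JKS}), which is the same Dedekind bookkeeping you do by hand with explicit lifts; your computations at odd $q\mid k^2+4$ and at $q=2$ match the paper's exactly. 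At the prime $p$ itself, the paper needs two lemmas: Lemma \ref{Lem:New1}, proving $\FF_p(\beta)\equiv 0\pmod{p^2}\Leftrightarrow\FF_p(\alpha)\equiv 0 \pmod{p^2}$ to resolve the ``which prime above $p$'' ambiguity, and Lemma \ref{Lem:Main2}, a case analysis on $\delta_p\in\{0,\pm 1\}$ translating $\FF_p(\alpha)\equiv 0\pmod{p^2}$ into the period condition via $\ord_{p^2}(\alpha)=\pi(p^2)$ (companion matrix), Theorem \ref{Thm:Period}, and a Hensel-lift trick when $\delta_p=1$. Your identity $\FF_p(\alpha)=(V_p-k)\alpha^p$ (valid for odd $p$ since $(\alpha\beta)^p=-1$, with $\alpha$ a unit) collapses both issues at once: the index condition becomes the \emph{rational} congruence $V_p\equiv k\pmod{p^2}$, which is automatically symmetric in $\alpha$ and $\beta$, so the analogue of Lemma \ref{Lem:New1} is free; and your Cassini/linear-system computation checks out in both cases $\delta_p=\pm1$ (one gets $V_p-k\equiv p\,s(k^2+4)/2 \pmod{p^2}$ where $U_{p-\delta_p}=ps$, so $p\nmid k^2+4$ forces the equivalence). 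The trade-off is in the imports: you use the known equivalence of the Wall--Sun--Sun condition with $U_{p-\delta_p}\equiv 0\pmod{p^2}$, which the paper cites in its introduction but deliberately avoids, proving its dictionary from scratch; the paper instead imports Theorem \ref{Thm:Period}. Both treatments import the fact that ramified odd primes are never $k$-Wall--Sun--Sun (Lemma \ref{Lem:HJ} in the paper; your assertion $\pi(p^2)=p\pi(p)$), and your ramified computation $\FF_p(a)\equiv -(k^2+4)/4\pmod{p^2}$ is the same calculation the paper runs, phrased as a contradiction, in the $\delta_p=0$ case of Lemma \ref{Lem:Main2}.

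One point to tighten: irreducibility. Capelli reduces it to showing $\alpha$ is not a $p$-th power in $\Q(\alpha)$, and the obvious norm argument only shows that a hypothetical $\gamma$ with $\gamma^p=\alpha$ is a unit of norm $-1$; to finish, the paper invokes Yokoi's result that $\alpha$ is the \emph{fundamental} unit of $\Q(\sqrt{\D})$ (where $4\nmid k$ guarantees $k\ne 4$). Your phrase ``a standard criterion for $x^p-\alpha$'' glosses over this genuinely needed input, though it is a one-line repair rather than a structural flaw.
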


%\begin{rem}
At first glance, Theorem \ref{Thm:Main} might appear to be just a special case of Theorem 1.2 in \cite{JonesEJM} or Theorem 1.2 in \cite{JonesAJM}. However, upon closer inspection,  we see that  certain restrictions on the prime $p$ and the quadratic character of $\D$ modulo $p$ are necessary in both \cite{JonesEJM} and \cite{JonesAJM}. Therefore, Theorem \ref{Thm:Main} represents an improvement over both \cite{JonesEJM} and \cite{JonesAJM}, in the particular situation of $k$-Wall-Sun-Sun primes, since no such restrictions are required here. Moreover, Theorem \ref{Thm:Main} provides explicit conditions under which ${\mathcal F}_p(x)$ is monogenic.
Since the particular situation of Theorem \ref{Thm:Main} might be more appealing to a broader audience than the generality found in \cite{JonesAJM}, and regardless of the fact that many of the same methods are employed in \cite{JonesAJM}, we give here a self-contained presentation with full details.
%\end{rem}
%As a consequence of Theorem \ref{Thm:Main} and previous work \cite{HJBAMS}, we show that, with the restrictions on $k$ and $\D$ in Theorem \ref{Thm:Main},  %${\mathcal F}_p(x)$ is monogenic if $p$ is a prime divisor of $k^2+4$. See Theorem \ref{Thm:Mono} in Section \ref{Section:Main}.
\section{Preliminaries}\label{Section:Prelim}
Throughout this article, we assume that $k$ is a positive integer such that $4\nmid k$ and $\D$ is squarefree, where $\D$ is as defined in \eqref{Eq:DD}.
%\[\D:=\left\{\begin{array}{cl}
%  k^2+4 & \mbox{if $k\equiv 1 \pmod{2}$}\\
%   (k/2)^2+1 & \mbox{if $k\equiv 0 \pmod{2}$.}
% \end{array} \right.\]
We also let:
\begin{itemize}
  \item $p$ and $q$ denote primes,
  \item $\alpha=\frac{k+\sqrt{k^2+4}}{2}$ and $\beta=\frac{k-\sqrt{k^2+4}}{2}$,
  \item $f(x):=x^2-kx-1$ (the characteristic polynomial of the sequence $(U_n)$),
  \item $\FF_p(x):=x^{2p}-kx^p-1$,
  \item $\ord_m(*)$ denote the order of $*$ modulo the integer $m\ge 2$,
  \item $\delta_p$ denote the Legendre symbol $\left(\frac{k^2+4}{p}\right)$.
 \end{itemize}

   The first result gives some known facts concerning $\pi(p^2)$ and $\pi(p)$. %, where $m\in \{p,p^2\}$. %For a proof, see \cite{Renault}. % due to Wall \cite{Wall}.
\begin{thm}\label{Thm:Period}{\rm \cite{GRS,Renault}}
\text{} %Suppose that $p\ne 2$.
  \begin{enumerate}
  %\item \label{I-1} $\pi_k(p)=2$ if and only if $k\equiv 0 \pmod{p}$.
  % \item \label{I0:p=2} If $p=2$, then \[\pi_k(p)=\left\{\begin{array}{cl}
  %  2 & \mbox{if $k\equiv 0 \pmod{2}$}\\
  %  3 & \mbox{if $k\equiv 1 \pmod{2}$}
  %\end{array}\right. \quad\mbox{and} \quad \pi_k(p^2)=\left\{\begin{array}{cl}
  %  2 & \mbox{if $k\equiv 0 \pmod{4}$}\\
  %  6 & \mbox{if $k\equiv 1 \pmod{4}$}\\
  %  4 & \mbox{if $k\equiv 2 \pmod{4}$}\\
  %  6 & \mbox{if $k\equiv 3 \pmod{2}$.}
  %\end{array}\right.\]
  %\item \label{R:I0} If $m\ge 3$ is an integer, then $\pi(m)\equiv 0\pmod{2}$.
  \item \label{R:I.5} $\pi(p^2)\in \{\pi(p),p\pi(p)\}$.
  %\item \label{R:I1} If $\delta_p=0$, then $\pi(p)=p\ord_p(2^{-1}k)$.
  \item \label{R:I2} If $\delta_p=1$, then $p-1\equiv 0 \pmod{\pi(p)}$.
  \item \label{R:I3} If $\delta_p=-1$, then $2(p+1)\equiv 0 \pmod{\pi(p)}$.
     \end{enumerate}
\end{thm}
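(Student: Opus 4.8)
The plan is to translate everything into statements about the companion matrix $M=\begin{pmatrix} k & 1\\ 1 & 0\end{pmatrix}$, whose powers encode the sequence through the identity $M^n=\begin{pmatrix} U_{n+1} & U_n\\ U_n & U_{n-1}\end{pmatrix}$, proved by a one-line induction. Since $\det M=-1$ is a unit modulo every $m$, we have $M\in GL_2(\Z/m\Z)$. The key preliminary observation is that $\pi(m)=\ord(M)$ in $GL_2(\Z/m\Z)$: if $M^n\equiv I\pmod m$ then $U_n\equiv 0$ and $U_{n+1}\equiv 1$, so the sequence repeats after $n$ steps; conversely, $(U_n,U_{n+1})\equiv(0,1)$ forces $U_{n-1}=U_{n+1}-kU_n\equiv 1$, whence $M^n\equiv I$. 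With this dictionary in hand, the three assertions all become statements about the order of a single matrix.

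For part (a), the divisibility $\pi(p)\mid\pi(p^2)$ is immediate: reduce any relation $M^n\equiv I\pmod{p^2}$ modulo $p$. For the reverse, I would write $M^{\pi(p)}=I+pA$ for an integer matrix $A$ and compute $M^{p\pi(p)}=(I+pA)^p=I+\sum_{j=1}^p\binom{p}{j}p^jA^j$. Every summand with $j\ge 1$ is divisible by $p^2$ (for $j=1$ the term is $\binom{p}{1}pA=p^2A$, and for $j\ge 2$ already $p^2\mid p^j$), and this holds uniformly for every prime including $p=2$. Hence $M^{p\pi(p)}\equiv I\pmod{p^2}$, so $\pi(p^2)\mid p\pi(p)$. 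Combining the two divisibilities gives $\pi(p)\mid\pi(p^2)\mid p\pi(p)$, and since $p$ is prime the ratio $\pi(p^2)/\pi(p)$ equals $1$ or $p$, which is exactly (a).

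For parts (b) and (c) I would pass to eigenvalues. Because $\delta_p\ne 0$, the discriminant $k^2+4$ is nonzero modulo $p$, so $f(x)=x^2-kx-1$ has distinct roots and $M$ is diagonalizable with eigenvalues $\alpha,\beta$; therefore $\ord(M)=\lcm(\ord(\alpha),\ord(\beta))$. If $\delta_p=1$, then $\alpha,\beta\in\F_p^{\times}$ (note $\alpha\beta=-1\ne 0$), so both orders divide $p-1$ and hence $\pi(p)\mid p-1$, giving (b). If $\delta_p=-1$, then $f$ is irreducible over $\F_p$ and its roots lie in $\F_{p^2}$, conjugate under Frobenius, so $\beta=\alpha^p$. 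As $\alpha\in\F_{p^2}^{\times}$ has order dividing $p^2-1$, which is coprime to $p$, we get $\ord(\alpha^p)=\ord(\alpha)$, and thus $\ord(M)=\ord(\alpha)$. Finally $\alpha^{p+1}=\alpha\beta=-1$ gives $\alpha^{2(p+1)}=1$, so $\pi(p)\mid 2(p+1)$, which is (c).

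The individual steps are standard; the part requiring the most care is the order-equals-period dictionary, since one must verify that the orbit of the initial state vector has period exactly $\ord(M)$ (not merely a divisor of it), which follows from the explicit entries of $M^n$. The eigenvalue arguments in (b) and (c) rest solely on the hypothesis $\delta_p\ne 0$ ensuring distinct eigenvalues and hence diagonalizability; the ramified case $p\mid k^2+4$ lies outside the scope of (b) and (c) and needs no treatment here.
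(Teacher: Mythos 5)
Your proof is correct. Note that the paper does not prove this theorem at all --- it is imported from the literature with the citation [GRS, Renault] --- so the comparison is with those sources rather than with an in-paper argument. Your route is essentially theirs: the dictionary $\pi(m)=\ord_m(M)$ for the companion matrix is exactly Robinson's result, which the paper itself invokes in the proof of Lemma \ref{Lem:Order}, and the eigenvalue analysis for $\delta_p=\pm 1$ (roots of $x^2-kx-1$ in $\F_p$ or $\F_{p^2}$ with $\beta=\alpha^p$ and $\alpha\beta=-1$) is the splitting-field argument of Gupta--Rockstroh--Su. All the delicate points are handled properly: the binomial lifting $(I+pA)^p\equiv I \pmod{p^2}$ is valid uniformly in $p$ (including $p=2$, since the $j=1$ term is $p^2A$), diagonalizability is justified by distinctness of the eigenvalues when $\delta_p\neq 0$, and the step $\ord(\alpha^p)=\ord(\alpha)$ correctly uses $\gcd(\ord(\alpha),p)=1$. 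Nothing is missing.
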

%\begin{proof}
%Item \ref{I0:p=2} follows easily by direct calculation.
%Items \ref{I-1} and \ref{I0:p=2} follow easily by direct calculation.
%Proofs of the remaining items can be found in \cite{Renault}. %\cite{GRS}.
%\end{proof}

%The next proposition appears as Proposition 1 in \cite{Y}.
%\begin{prop}\label{Prop:Yokoi}
%Let $k\ge 1$ be an integer, such that $k\ne 4$ and $\D$ is squarefree. Then $\varepsilon:=(k+\sqrt{k^2+4})/2$ is the fundamental unit of $\Q(\sqrt{\D})$ with %$\NN(\varepsilon)=-1$, where $\NN:=\NN_{\Q(\varepsilon)/\Q}$ denotes the algebraic norm.
%\end{prop}

The following lemma is a special case of \cite[Theorem 1.1]{HJBAMS}.
\begin{lemma}\label{Lem:HJ}
  Suppose that $p$ is a divisor of $k^2+4$. If $p=2$, then $p$ is a $k$-Wall-Sun-Sun prime if and only if $k\equiv 0 \pmod{4}$.
  If $p\ge 3$, then $p$ is not a $k$-Wall-Sun-Sun prime.
\end{lemma}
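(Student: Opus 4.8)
The plan is to reduce everything to the equivalent condition recorded in the introduction, namely that $p$ is a $k$-Wall-Sun-Sun prime if and only if $U_{\pi(p)}\equiv 0\pmod{p^2}$, and to treat the cases $p=2$ and $p\ge 3$ separately. A convenient tool throughout is the companion matrix $M=\left(\begin{smallmatrix} k & 1\\ 1 & 0\end{smallmatrix}\right)$, whose powers satisfy $M^n=\left(\begin{smallmatrix} U_{n+1} & U_n\\ U_n & U_{n-1}\end{smallmatrix}\right)$, so that $\pi(m)=\ord_m(M)$ in $\mathrm{GL}_2(\Z/m\Z)$. I will also use the companion sequence $V_n=\alpha^n+\beta^n$, the doubling identity $U_{2n}=U_nV_n$, and the observation that $p=3$ cannot divide $k^2+4$: indeed $k^2+4\equiv k^2+1\pmod 3$ is never $\equiv 0$, since $-1$ is not a square modulo $3$.

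For $p=2$, the hypothesis $2\mid k^2+4$ forces $k$ even, so $U_n\equiv U_{n-2}\pmod 2$ and the sequence is $0,1,0,1,\dots$ modulo $2$, giving $\pi(2)=2$. Since $U_{\pi(2)}=U_2=k$, the condition $U_{\pi(2)}\equiv 0\pmod 4$ is simply $k\equiv 0\pmod 4$; alternatively one checks directly that $\pi(4)=2$ when $4\mid k$ and $\pi(4)=4$ when $k\equiv 2\pmod4$. Either way, $2$ is a $k$-Wall-Sun-Sun prime if and only if $k\equiv0\pmod4$.

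For odd $p\mid k^2+4$ (hence $p\ge 5$, and $p\nmid k$ since otherwise $p\mid 4$), modulo $p$ the polynomial $f$ has the double root $r\equiv k/2$, and because $r^2\equiv(k^2+4)/4-1\equiv -1\pmod p$ one has $\ord_p(r)=4$. As $M$ is non-scalar, it is conjugate over $\F_p$ to the Jordan block $\left(\begin{smallmatrix} r & 1\\ 0 & r\end{smallmatrix}\right)$, whose $n$-th power is $\left(\begin{smallmatrix} r^n & nr^{n-1}\\ 0 & r^n\end{smallmatrix}\right)$; this equals the identity exactly when $4\mid n$ and $p\mid n$, so $\pi(p)=\ord_p(M)=4p$. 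I would then pin down $v_p(U_p)$ from the binomial expansion $2^{p-1}U_p=\sum_{j=0}^{(p-1)/2}\binom{p}{2j+1}k^{p-1-2j}(k^2+4)^j$: modulo $p^2$ every inner term ($1\le j\le(p-3)/2$) vanishes because both $\binom{p}{2j+1}$ and $(k^2+4)^j$ are divisible by $p$; the top term $(k^2+4)^{(p-1)/2}$ vanishes because $v_p(k^2+4)\ge 1$ and $(p-1)/2\ge 2$; and the term $j=0$ equals $pk^{p-1}\equiv p\pmod{p^2}$ by Fermat. Hence $2^{p-1}U_p\equiv p\pmod{p^2}$ and $v_p(U_p)=1$. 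Applying the doubling identity twice gives $U_{\pi(p)}=U_{4p}=U_pV_pV_{2p}$, where $V_p\equiv k$ and $V_{2p}=V_p^2+2\equiv k^2+2\equiv -2\pmod p$ are both units modulo $p$; therefore $v_p(U_{\pi(p)})=v_p(U_p)=1<2$, so $U_{\pi(p)}\not\equiv0\pmod{p^2}$ and $p$ is not a $k$-Wall-Sun-Sun prime.

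The main obstacle is this odd-prime computation, and specifically the exact claim $v_p(U_p)=1$. The delicate point is that this is the ramified case $p\mid\Delta(f)$, where the usual ``lifting-the-exponent'' shortcuts for Lucas sequences do not apply, so the exact power of $p$ must be extracted by hand. The argument hinges on $p\ge 5$ (so that the top binomial term is killed modulo $p^2$) together with $p\nmid k$ (so that the $j=0$ term survives), both of which are guaranteed once $p=3$ is ruled out and $p\mid k^2+4$ is used. Knowing $\pi(p)=4p$ exactly---rather than merely a multiple of $p$---is what lets me pass cleanly from $U_p$ to $U_{\pi(p)}$ through the purely $2$-power doubling $4p=2^2\cdot p$, avoiding any odd multiplier that could alter the $p$-adic valuation.
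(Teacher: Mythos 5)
Your proof is correct. Note, however, that the paper does not prove this lemma at all: it simply records it as a special case of Theorem 1.1 of the cited work of Harrington and Jones, so any comparison is really with that external result rather than with an argument in this paper. What you supply is a complete, self-contained, elementary substitute. The two genuinely substantive points are handled properly: (i) the ramified structure $\pi(p)=4p$, which you get cleanly from the Jordan form of the non-scalar companion matrix together with $\ord_p(k/2)=4$ (using $(k/2)^2\equiv -1\pmod p$ and the automatic exclusion of $p=3$); and (ii) the exact valuation $v_p(U_p)=1$, extracted from the identity $2^{p-1}U_p=\sum_{j}\binom{p}{2j+1}k^{p-1-2j}(k^2+4)^j$, where you correctly isolate the surviving term $pk^{p-1}\equiv p\pmod{p^2}$ and correctly note that killing the top term needs $(p-1)/2\ge 2$, i.e.\ $p\ge 5$. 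The passage from $U_p$ to $U_{4p}$ via $U_{4p}=U_pV_pV_{2p}$ with $V_p\equiv k$ and $V_{2p}\equiv -2$ units mod $p$ is also sound, as is the $p=2$ case by direct computation of $\pi(4)$. The one thing you import without proof is the equivalence between $\pi(p^2)=\pi(p)$ and $U_{\pi(p)}\equiv 0\pmod{p^2}$; the paper asserts this in its introduction with references, so that is a legitimate black box here (and for $p=2$ you sidestep it anyway by computing $\pi(4)$ directly). What your route buys is independence from the external reference and, as a byproduct, the explicit value $\pi(p)=4p$ for ramified primes; what it costs is length relative to a one-line citation.
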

The next two theorems are due to Capelli \cite{S}.
 \begin{thm}\label{Thm:Capelli1}  Let $f(x)$ and $h(x)$ be polynomials in $\Q[x]$ with $f(x)$ irreducible. Suppose that $f(\alpha)=0$. Then $f(h(x))$ is reducible over $\Q$ if and only if $h(x)-\alpha$ is reducible over $\Q(\alpha)$.
 \end{thm}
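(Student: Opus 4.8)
The plan is to prove the equivalent statement phrased in terms of irreducibility: $f(h(x))$ is \emph{irreducible} over $\Q$ if and only if $h(x)-\alpha$ is \emph{irreducible} over $\Q(\alpha)$; the reducibility version then follows by negation, since each of the two polynomials is either reducible or irreducible (we assume $\deg h\ge 1$, so that both $f(h(x))$ and $h(x)-\alpha$ are nonconstant). Write $n:=\deg f$ and $d:=\deg h$, so that $f(h(x))$ has degree $nd$. The key device is to fix a single root $\gamma$ of $h(x)-\alpha$ in a fixed algebraic closure and to study the tower $\Q\subseteq \Q(\alpha)\subseteq \Q(\gamma)$.

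First I would record the basic structure of this tower. Since $h(\gamma)=\alpha$, we have $\alpha\in\Q(\gamma)$, whence $\Q(\alpha)\subseteq\Q(\gamma)$ and in fact $\Q(\gamma)=\Q(\alpha)(\gamma)$. Because $f$ is irreducible with $f(\alpha)=0$, we get $[\Q(\alpha):\Q]=n$. Next, $\gamma$ is a root of $h(x)-\alpha\in\Q(\alpha)[x]$, a polynomial of degree $d$, so the minimal polynomial of $\gamma$ over $\Q(\alpha)$ divides $h(x)-\alpha$; hence $[\Q(\gamma):\Q(\alpha)]\le d$, with equality precisely when $h(x)-\alpha$ is irreducible over $\Q(\alpha)$. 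Finally, from $f(h(\gamma))=f(\alpha)=0$, the element $\gamma$ is a root of $f(h(x))$, so its minimal polynomial over $\Q$ divides $f(h(x))$; thus $[\Q(\gamma):\Q]\le nd$, with equality precisely when $f(h(x))$ is irreducible over $\Q$ (an equality of degrees together with divisibility forces the minimal polynomial to equal $f(h(x))$ up to a constant factor).

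The two equivalences are then chained together through the multiplicativity of degrees in a tower:
\[
[\Q(\gamma):\Q]=[\Q(\gamma):\Q(\alpha)]\,[\Q(\alpha):\Q]=n\,[\Q(\gamma):\Q(\alpha)].
\]
Consequently, $f(h(x))$ is irreducible over $\Q$ if and only if $[\Q(\gamma):\Q]=nd$, if and only if $[\Q(\gamma):\Q(\alpha)]=d$, if and only if $h(x)-\alpha$ is irreducible over $\Q(\alpha)$. Negating both outer conditions yields the stated biconditional for reducibility.

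The main obstacle, and really the only delicate point, is justifying the two ``equality if and only if irreducible'' claims rigorously rather than by analogy. Each rests on the same fact: if an irreducible factor (the minimal polynomial of $\gamma$) divides a polynomial of the same degree, the two must agree up to a scalar. I would make this explicit for both $h(x)-\alpha$ over $\Q(\alpha)$ and $f(h(x))$ over $\Q$, taking care with normalization in the case that $h$ is not monic, and I would stress that it is essential to take $\gamma$ to be a root of $h(x)-\alpha$ itself, so that $h(\gamma)=\alpha$ exactly and not merely $h(\gamma)$ equal to some conjugate of $\alpha$; this is what guarantees $\Q(\alpha)\subseteq\Q(\gamma)$ and keeps the tower decomposition valid.
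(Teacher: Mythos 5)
Your proof is correct, and it is worth noting that the paper does not prove this statement at all: it is quoted as Capelli's theorem with a citation to Schinzel's book, and your tower argument --- fixing a root $\gamma$ of $h(x)-\alpha$, observing $\Q\subseteq\Q(\alpha)\subseteq\Q(\gamma)$ with $h(\gamma)=\alpha$ exactly, and playing the multiplicativity $[\Q(\gamma):\Q]=n\,[\Q(\gamma):\Q(\alpha)]$ against the two ``minimal polynomial divides, so equality of degrees forces irreducibility'' bounds --- is precisely the standard proof of Capelli's lemma found in the literature. Your added hypothesis $\deg h\ge 1$ is a reasonable reading of the statement (it rules out the degenerate constant case, where neither polynomial is properly ``reducible or irreducible'') and is satisfied in the paper's only application, $h(x)=x^p$.
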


\begin{thm}\label{Thm:Capelli2}  Let $c\in \Z$ with $c\geq 2$, and let $\alpha\in\C$ be algebraic.  Then $x^c-\alpha$ is reducible over $\Q(\alpha)$ if and only if either there is a prime $p$ dividing $c$ such that $\alpha=\gamma^p$ for some $\gamma\in\Q(\alpha)$ or $4\mid c$ and $\alpha=-4\gamma^4$ for some $\gamma\in\Q(\alpha)$.
\end{thm}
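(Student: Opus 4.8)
The plan is to recast monogenicity of $\FF_p(x)=f(x^p)$ as a Dedekind-criterion computation at the primes dividing its discriminant, and then to identify the criterion at $p$ with the Wall--Sun--Sun congruence. First I will pin down the quadratic subfield $\Q(\alpha)$. Because $\D$ is squarefree, a short check of the two cases $k$ odd (so $k^2+4=\D\equiv 1\pmod 4$) and $k\equiv 2\pmod 4$ (so $k^2+4=4\D$ with $\D\equiv 2\pmod 4$) shows in each case that $\Delta(f)=k^2+4$ equals the field discriminant of $\Q(\alpha)$; hence $\Z[\alpha]$ is the maximal order and $\alpha$ is the fundamental unit, being the unit $>1$ with minimal positive $\sqrt{\D}$-coefficient. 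In particular $\alpha$ is not a $p$-th power in $\Q(\alpha)$: for $p=2$ this is immediate from $N(\alpha)=\alpha\beta=-1$ (a square of a rational cannot be $-1$), and for odd $p$ it follows from $\alpha$ being fundamental. By Theorems \ref{Thm:Capelli1} and \ref{Thm:Capelli2}, $\FF_p(x)=f(x^p)$ is therefore irreducible over $\Q$, so $K=\Q(\theta)$ with $\theta^p=\alpha$ has degree $2p$ and contains $\Q(\alpha)$.

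Next I will compute the discriminant. From $\FF_p'(x)=px^{p-1}(2x^p-k)$, $\prod_i\theta_i=-1$, and $(2\alpha-k)(2\beta-k)=-(k^2+4)$, the resultant gives $\Delta(\FF_p)=\pm p^{2p}(k^2+4)^p$. Thus, by \eqref{Eq:Dis-Dis}, the only primes that can divide $[\Z_K:\Z[\theta]]$ are $p$ and the primes dividing $k^2+4$. For an odd prime $q\mid k^2+4$ with $q\ne p$, the polynomial $f$ has a double root $r$ modulo $q$ with $2r\equiv k$ and $r^2\equiv -1$, so $\FF_p(x)\equiv (x^p-r)^2\pmod q$ with $x^p-r$ separable (as $q\ne p$); choosing the lift $r_0\equiv k/2\pmod{q^2}$ and applying Dedekind's criterion, the obstruction collapses to the single condition $r_0^2+1\equiv (k^2+4)/4\not\equiv 0\pmod{q^2}$, which holds because squarefreeness of $\D$ forces $q\,\|\,k^2+4$. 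Hence no such $q$ divides the index, and the monogenicity of $\FF_p$ is governed entirely by the behavior at $p$ (the prime $2$, occurring only when $k\equiv 2\pmod 4$, will be treated with the case $p=2$ below).

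The heart of the argument is Dedekind's criterion at $p$ when $p\nmid k^2+4$. Here $\FF_p(x)\equiv f(x)^p\pmod p$ by Frobenius, $f$ is separable modulo $p$, and taking $f$ itself as the lift we get $T:=(\FF_p-f^p)/p$ with $T(\alpha)=f(\alpha^p)/p$. A Galois-symmetry argument (the nontrivial automorphism of $\Q(\alpha)$ interchanges the two primes above $p$ when $\delta_p=1$ and fixes the unique one when $\delta_p=-1$) shows that $\gcd(\overline T,\overline f)\ne 1$, i.e.\ $p\mid[\Z_K:\Z[\theta]]$, if and only if $f(\alpha^p)\equiv 0\pmod{p^2\Z[\alpha]}$. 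The crucial simplification, the step I expect to be the main obstacle to discover, is the identity
\[
f(\alpha^p)=(V_p-k)\,\alpha^p,\qquad V_p:=\alpha^p+\beta^p,
\]
valid for odd $p$ because $\alpha^p\beta^p=(-1)^p=-1$ makes $\alpha^p$ a root of $X^2-V_pX-1$. Since $\alpha^p$ is a unit, this yields, for odd $p\nmid k^2+4$, that $p\mid[\Z_K:\Z[\theta]]$ if and only if $V_p\equiv k\pmod{p^2}$.

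Finally I will tie $V_p\equiv k\pmod{p^2}$ to the Wall--Sun--Sun condition. Using $2U_{p-\delta_p}=V_p-\delta_p kU_p$ together with the norm identity $V_p^2-(k^2+4)U_p^2=4(\alpha\beta)^p=-4$, I will write $V_p=k+pb$ and $U_p=\delta_p+pa$ and compare modulo $p^2$; eliminating $b$ produces $4\delta_p a\equiv 0\pmod p$, which makes $V_p\equiv k\pmod{p^2}$ and $U_{p-\delta_p}\equiv 0\pmod{p^2}$ equivalent. Since $U_{p-\delta_p}\equiv 0\pmod{p^2}$ is the known criterion (via Theorem \ref{Thm:Period}) for $p$ to be a $k$-Wall--Sun--Sun prime, the main equivalence follows for odd $p\nmid k^2+4$. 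The remaining cases are routine applications of Dedekind's criterion in which the reduction of $\FF_p$ is a pure power of a single linear (or the factor $x+1$) and $p$ is shown not to divide the index, so $\FF_p$ is monogenic there: for odd $p\mid k^2+4$ (which forces $p\equiv 1\pmod 4$) one finds $\FF_p(r_0)\equiv -(k^2+4)/4\pmod{p^2}$ with $r_0\equiv k/2$, nonzero precisely because $\D$ squarefree gives $p\,\|\,k^2+4$, and the case $p=2$ is dispatched directly in the two subcases $k$ odd and $k\equiv 2\pmod 4$. Combining these with Lemma \ref{Lem:HJ} (which, since $4\nmid k$, shows every $p\mid k^2+4$ is \emph{not} a $k$-Wall--Sun--Sun prime) makes the equivalence persist in these cases and simultaneously establishes the ``furthermore'' assertion.
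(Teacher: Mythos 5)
Your proposal does not address the statement it was supposed to prove. The statement is Capelli's classical criterion for the reducibility of the binomial $x^c-\alpha$ over $\Q(\alpha)$, which the paper quotes without proof from Schinzel \cite{S} and uses as a black box. Your text instead sketches a proof of the paper's main result, Theorem \ref{Thm:Main}, and in the course of doing so explicitly invokes ``Theorems \ref{Thm:Capelli1} and \ref{Thm:Capelli2}'' as known tools. As a proof of Theorem \ref{Thm:Capelli2} this is vacuous and circular: nowhere do you argue either direction of the biconditional about $x^c-\alpha$.

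Concretely, a proof would have to supply both halves. The ``if'' direction is elementary: if $\alpha=\gamma^p$ for a prime $p\mid c$, then substituting $y=x^{c/p}$ into $y^p-\gamma^p=(y-\gamma)\left(y^{p-1}+\gamma y^{p-2}+\cdots+\gamma^{p-1}\right)$ gives a nontrivial factorization of $x^c-\alpha$ over $\Q(\alpha)$; and if $4\mid c$ and $\alpha=-4\gamma^4$, the Sophie Germain identity
\[
x^{4m}+4\gamma^4=\left(x^{2m}-2\gamma x^m+2\gamma^2\right)\left(x^{2m}+2\gamma x^m+2\gamma^2\right),\qquad m=c/4,
\]
does the same. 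The ``only if'' direction is the substantive half and requires the standard field-theoretic argument: reduce by induction on the number of prime factors of $c$, use norms from $\Q(\alpha,\theta)$ down to $\Q(\alpha)$ (where $\theta^c=\alpha$) to show that irreducibility can only fail when $\alpha$ is a $p$-th power for some $p\mid c$, and treat $c\equiv 0\pmod 4$ separately, which is exactly where the exceptional case $\alpha=-4\gamma^4$ arises; this is the Vahlen--Capelli argument found in \cite{S}. None of this appears in your proposal. (For what it is worth, had the target been Theorem \ref{Thm:Main}, your sketch is broadly parallel to the paper's actual proof --- discriminant computation plus an index criterion localized at $p$ --- though the paper routes the $p$-part through the trinomial criterion of Theorem \ref{Thm:JKS} and congruences for $\alpha^p$ modulo $p^2$ rather than your reformulation $V_p\equiv k\pmod{p^2}$; but that was not the statement at hand.)
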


The discriminant of $\FF_p(x)$ given in the next proposition follows from the formula for the discriminant of an arbitrary monic trinomial \cite{Swan}.
\begin{prop}\label{Prop:Swan} %Let $p$ be a prime. Then
$\Delta(\FF_p)=(-1)^{(p+1)(2p-1)}p^{2p}(k^2+4)^p$.
%Let $\FF_p(x)=x^{2p}-x^p-1$, where $p$ is a prime. Then
%\begin{equation*}\label{Eq:Del(f)}
%\Delta(\FF_p)=(-1)^{(p+1)(2p-1)}p^{2p}(k^2+4)^p.
%\end{equation*}
\end{prop}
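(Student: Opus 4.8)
The plan is to read off $\Delta(\FF_p)$ directly from the general formula for the discriminant of a monic trinomial. Writing $\FF_p(x)=x^{n}+ax^{m}+b$ with $n=2p$, $m=p$, $a=-k$ and $b=-1$, I would set $d=\gcd(n,m)=p$, so that $n_1=n/d=2$ and $m_1=m/d=1$, and then substitute these values into Swan's formula \cite{Swan}. The entire computation is then a matter of simplifying the resulting closed-form expression; there is no conceptual difficulty, only bookkeeping.

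Concretely, substituting $n_1=2$ and $m_1=1$ collapses the inner bracket of Swan's formula to a single expression of the form $-p^{2}(k^2+4)$, the contributions $(2p)^2b$ and $(n-m)m\,a^{2}=p^{2}k^{2}$ combining after the substitution $a=-k$, $b=-1$. Raising this bracket to the power $d=p$ then produces exactly the factor $p^{2p}(k^2+4)^{p}$, up to a sign $(-1)^p$. The remaining sign is assembled from the prefactor $(-1)^{n(n-1)/2}=(-1)^{p(2p-1)}$, from $b^{m-1}=(-1)^{p-1}$, and from that $(-1)^p$. The only genuinely delicate point is verifying that these combine to $(-1)^{(p+1)(2p-1)}$; I expect this sign bookkeeping to be the main (and essentially the only) obstacle, and it is routine once one observes that $(p+1)(2p-1)\equiv p(2p-1)+1 \pmod 2$.

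As an independent derivation, and one that stays closer to the paper's notation $\alpha,\beta$, I would instead use the resultant identity $\Delta(\FF_p)=(-1)^{n(n-1)/2}\mathrm{Res}(\FF_p,\FF_p')$, valid since $\FF_p$ is monic. Here $\FF_p'(x)=p\,x^{p-1}(2x^{p}-k)$, so that $\mathrm{Res}(\FF_p,\FF_p')=\prod_i \FF_p'(\theta_i)=p^{2p}\bigl(\prod_i\theta_i\bigr)^{p-1}\prod_i(2\theta_i^{p}-k)$, the products ranging over the $2p$ roots $\theta_i$ of $\FF_p$. The factor $p^{2p}$ is immediate from the derivative, and $\prod_i\theta_i=-1$ since $b=-1$. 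For the last product I would exploit the factorization $\FF_p(x)=(x^{p}-\alpha)(x^{p}-\beta)$: the roots split into $p$ roots with $\theta_i^{p}=\alpha$ and $p$ roots with $\theta_i^{p}=\beta$. Since $2\alpha-k=\sqrt{k^2+4}$ and $2\beta-k=-\sqrt{k^2+4}$, this gives $\prod_i(2\theta_i^{p}-k)=\bigl(\sqrt{k^2+4}\bigr)^{p}\bigl(-\sqrt{k^2+4}\bigr)^{p}=(-1)^{p}(k^2+4)^{p}$.

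Assembling these pieces yields $\mathrm{Res}(\FF_p,\FF_p')=p^{2p}(-1)^{p-1}(-1)^{p}(k^2+4)^{p}=-p^{2p}(k^2+4)^{p}$, and multiplying by $(-1)^{n(n-1)/2}=(-1)^{p(2p-1)}$ recovers $(-1)^{(p+1)(2p-1)}p^{2p}(k^2+4)^{p}$, as claimed. This second route has the advantage of being self-contained and of providing an independent check on precisely the sign that I flagged as the chief hazard in the direct appeal to Swan's formula.
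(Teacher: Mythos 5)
Your proposal is correct, and your first route is exactly what the paper does: the paper offers no computation at all, simply citing Swan's trinomial discriminant formula, and your substitution $n=2p$, $m=p$, $a=-k$, $b=-1$, $d=p$, $n_1=2$, $m_1=1$ with inner bracket $-p^2(k^2+4)$ and sign exponent $p(2p-1)+(p-1)+p\equiv(p+1)(2p-1)\pmod 2$ checks out. Your second, resultant-based derivation via the factorization $\FF_p(x)=(x^p-\alpha)(x^p-\beta)$ and $2\alpha-k=-(2\beta-k)=\sqrt{k^2+4}$ is a genuine independent verification not present in the paper, and it is a worthwhile addition precisely because it confirms the one delicate point (the sign) without relying on remembering Swan's formula correctly.
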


The next theorem is essentially an algorithmic adaptation, specifically for trinomials, of Dedekind's Index Criterion \cite{Cohen}, which is a standard tool used to determine the monogenicity of an irreducible monic polynomial.
\begin{thm}{\rm \cite{JKS2}}\label{Thm:JKS}
Let $N\ge 2$ be an integer.
Let $K=\Q(\theta)$ be an algebraic number field with $\theta\in \Z_K$, the ring of integers of $K$, having minimal polynomial $f(x)=x^{N}+Ax^M+B$ over $\Q$, with $\gcd(M,N)=r$, $N_1=N/r$ and $M_1=M/r$. Let
\begin{equation}\label{Eq:D}
D:=N^{N_1}B^{N_1-M_1}-(-1)^{N_1}M^{M_1}(N-M)^{N_1-M_1}A^{N_1}.
\end{equation} %$D$ be as defined in \eqref{Eq:D}.
 A prime factor $q$ of $\Delta(f)$ does not divide $\left[\Z_K:\Z[\theta]\right]$ if and only if $q$ satisfies one of the following items:
\begin{enumerate}[font=\normalfont]
  \item \label{JKS:I1} when $q\mid A$ and $q\mid B$, then $q^2\nmid B$;
  \item \label{JKS:I2} when $q\mid A$ and $q\nmid B$, then
  \[\mbox{either } \quad q\mid A_2 \mbox{ and } q\nmid B_1 \quad \mbox{ or } \quad q\nmid A_2\left((-B)^{M_1}A_2^{N_1}-\left(-B_1\right)^{N_1}\right),\]
  where $A_2=A/q$ and $B_1=\frac{B+(-B)^{q^e}}{q}$ with $q^e\mid\mid N$;
  \item \label{JKS:I3} when $q\nmid A$ and $q\mid B$, then
  \[\mbox{either } \quad q\mid A_1 \mbox{ and } q\nmid B_2 \quad \mbox{ or } \quad q\nmid A_1B_2^{M-1}\left((-A)^{M_1}A_1^{N_1-M_1}-\left(-B_2\right)^{N_1-M_1
  }\right),\]
  where $A_1=\frac{A+(-A)^{q^j}}{q}$ with $q^j\mid\mid (N-M)$, and $B_2=B/q$;
  \item \label{JKS:I4} when $q\nmid AB$ and $q\mid M$ with $N=uq^m$, $M=vq^m$, $q\nmid \gcd\left(u,v\right)$, then the polynomials
   \begin{align*}
    G(x):&=x^{N/q^m}+Ax^{M/q^m}+B \quad \mbox{and}\\
    H(x):&=\dfrac{Ax^{M}+B+\left(-Ax^{M/q^m}-B\right)^{q^m}}{q}
   \end{align*}
   are coprime modulo $q$;
         \item \label{JKS:I5} when $q\nmid ABM$, then $q^2\nmid D/r^{N_1}$.
     %\[p^2\nmid \left(B^{N_1-M_1}N_1^{N_1}-(-1)^{M_1}A^{N_1}M_1^{M_1}(M_1-N_1)^{N_1-M_1} \right).\]
   \end{enumerate}
\end{thm}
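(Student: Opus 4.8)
The plan is to derive this criterion from Dedekind's Index Criterion \cite{Cohen}, applied to $f(x)=x^N+Ax^M+B$ one prime $q\mid\Delta(f)$ at a time. Recall the mechanics of Dedekind's test: factor $\overline f=\prod_i \overline g_i^{\,e_i}$ into distinct monic irreducibles over $\F_q$, choose monic lifts $g_i\in\Z[x]$, and set $g=\prod_i g_i$ (a lift of the radical of $\overline f$), $h=\prod_i g_i^{\,e_i-1}$, and $T=(f-gh)/q\in\Z[x]$; then $q\nmid[\Z_K:\Z[\theta]]$ if and only if $\gcd(\overline T,\overline g,\overline h)=1$ in $\F_q[x]$, equivalently $\overline g_i\nmid\overline T$ for every $i$ with $e_i\ge 2$. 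The governing preliminary observation is that we test only primes $q\mid\Delta(f)$, so $\overline f$ is inseparable and $\gcd(\overline f,\overline f\,')\ne 1$; computing $\overline f\,'=x^{M-1}\bigl(Nx^{N-M}+AM\bigr)$ then localizes the repeated factors and, crucially, eliminates the degenerate subcases in which $\overline f$ would in fact be separable (in which case $q\nmid\Delta(f)$ and there is nothing to prove).

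First I would split the primes $q\mid\Delta(f)$ according to the divisibility pattern of $A$, $B$, $M$ by $q$, which produces exactly the five items. The case $q\mid A$, $q\mid B$ is immediate: $\overline f=x^N$, so $g=x$, $h=x^{N-1}$, $T=(Ax^M+B)/q$, and $\gcd(\overline T,x)=1$ reads off the constant term $B/q$ to give $q^2\nmid B$. Each remaining case begins by rewriting the inseparable part of $\overline f$ as a $q$-power. When $q\mid A$, $q\nmid B$, inseparability forces $q\mid N$; writing $q^e\mid\mid N$ and $m=N/q^e$ yields $\overline f=(x^m+\overline B)^{q^e}$ with $x^m+\overline B$ separable, so the radical is $\overline g=x^m+\overline B$ and $\overline h=\overline g^{\,q^e-1}$. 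Parallel factorizations drive the others: $\overline f=x^M\bigl(x^\ell+\overline A\bigr)^{q^j}$ with $q^j\mid\mid(N-M)$ and $\ell=(N-M)/q^j$ for $q\nmid A$, $q\mid B$ (the repeated factors being $x$, of multiplicity $M$, together with those of $x^\ell+\overline A$); $\overline f=G(x)^{q^m}$ for $q\nmid AB$, $q\mid M$, which is the most transparent nontrivial case since the radical and $T$ appear directly as $G$ and $H$, so the condition is literally the Dedekind coprimality $\gcd(\overline G,\overline H)=1$; and for $q\nmid ABM$ (where $q\mid\Delta(f)$ forces $q\nmid N$ as well) the repeated factors of $\overline f$ all come from the common zeros of $\overline f$ and $Nx^{N-M}+AM$.

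The heart of the proof, and the step I expect to be the main obstacle, is the explicit evaluation of $\gcd(\overline T,\overline g)=1$ in the cases $q\mid A$, $q\nmid B$; $q\nmid A$, $q\mid B$; and $q\nmid ABM$. In each, one expands the relevant $q^e$-th power and divides by $q$; the binomial coefficients $\frac{1}{q}\binom{q^e}{i}$ must be tracked, but when $\overline T$ is reduced modulo the radical $\overline g$ — that is, evaluated at the roots $\zeta$ of $x^m+\overline B$, using $\zeta^m=-\overline B$ — all the higher correction terms collapse to constants and (for the prototypical $e=1$, via $\sum_{i=1}^{q-1}i^{-1}\equiv 0\pmod q$, and its analogues for $e>1$) cancel, leaving $\overline T\equiv \overline{A_2}\,x^M+\overline{B_1}\pmod{\overline g}$ with $A_2=A/q$ and $B_1=(B+(-B)^{q^e})/q$. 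The two-pronged ``either $\dots$ or $\dots$'' then merely separates the branch $q\mid A_2$, where $\overline T(\zeta)=\overline{B_1}$ is constant and coprimality means $q\nmid B_1$, from the branch $q\nmid A_2$, where coprimality means the resultant $\mathrm{Res}\bigl(x^m+\overline B,\ \overline{A_2}x^M+\overline{B_1}\bigr)$ is a unit; the extra factor $A_2$ multiplying the displayed expression is exactly what toggles between these two branches. Identifying this resultant with the closed form $(-B)^{M_1}A_2^{N_1}-(-B_1)^{N_1}$ is the delicate point: the naive resultant carries the exponent $N_1/q^e$ rather than $N_1$, and one reconciles them by raising to the $q^e$-th power and applying the Frobenius $x\mapsto x^{q^e}$ on $\F_q$, which preserves the divisibility condition. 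The case $q\nmid ABM$ is then finished by matching against the trinomial discriminant formula \cite{Swan}: since $q\nmid B$, the factor $B^{M-1}$ there is a unit mod $q$, so the entire $q$-adic valuation of $\Delta(f)$ is carried by $D$ (equivalently $D/r^{N_1}$, as $q\nmid r$), and Dedekind's condition for the double factors reduces to $q$ not dividing the square part of the discriminant, giving $q^2\nmid D/r^{N_1}$. The remainder is a disciplined but routine case-by-case verification.
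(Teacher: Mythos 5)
Your proposal takes essentially the same route as the actual proof: the paper states this theorem without proof, citing \cite{JKS2}, and explicitly frames it as an algorithmic adaptation of Dedekind's index criterion, which is precisely the engine of your argument --- down to the case split by divisibility of $A$, $B$, $M$ by $q$, the $q$-power factorizations of $\overline{f}$ in each case, the $\sum_{i=1}^{q-1} i^{-1}\equiv 0 \pmod{q}$ cancellation producing $\overline{A_2}\,x^M+\overline{B_1}$, the resultant closed form reconciled via Frobenius, and the matching against Swan's discriminant formula when $q\nmid ABM$ (all of which check out). One caveat for a full write-up: in item (4) the identification with $\gcd(\overline{G},\overline{H})=1$ is \emph{literal} only when $\overline{G}$ is separable modulo $q$ (choose the lift so that $f-qH=G\cdot W$ with $\overline{W}=\overline{G}^{\,q^m-1}$, making $T=H$ exactly); when $\overline{G}$ itself has repeated factors --- possible here, since $q\nmid\gcd(u,v)$ does not force $\overline{G}$ separable --- the radical of $\overline{f}$ is strictly smaller than $\overline{G}$, and the stated coprimality condition must be proved equivalent to Dedekind's test by a short extra argument, as is done in \cite{JKS2}.
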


\begin{comment}
The next theorem follows from Corollary (2.10) in \cite{Neukirch}.
\begin{thm}\label{Thm:CD}%{\rm \cite{Neukirch}}
  Let $K$ and $L$ be number fields with $K\subset L$. Then \[\Delta(K)^{[L:K]} \bigm|\Delta(L).\]
\end{thm}
\end{comment}

\section{The Proof of Theorem \ref{Thm:Main}}\label{Section:Main}
%Throughout this section, we assume that $k\not \equiv 0 \pmod{4}$ and that $\D$ is squarefree, where
% \[\D:=\left\{\begin{array}{cl}
%  k^2+4 & \mbox{if $k\equiv 1 \pmod{2}$}\\
%   (k/2)^2+1 & \mbox{if $k\equiv 0 \pmod{2}$.}
% \end{array} \right.\]

We first prove some lemmas. %The next result addresses the irreducibility of $\FF_p(x)$ over $\Q$.
\begin{lemma}
  The polynomial $\FF_p(x)$ is irreducible over $\Q$.
\end{lemma}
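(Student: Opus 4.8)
The plan is to prove irreducibility of $\FF_p(x) = x^{2p} - kx^p - 1$ by using the Capelli theorems (Theorem~\ref{Thm:Capelli1} and Theorem~\ref{Thm:Capelli2}) together with the fact that the ``inner'' quadratic $f(x) = x^2 - kx - 1$ is irreducible over $\Q$. Observe that $\FF_p(x) = f(x^p)$, where $f$ is the characteristic polynomial of the Lucas sequence. Since $\D = (k^2+4)/\gcd(2,k)^2$ is squarefree, $k^2+4$ is not a perfect square, so $f(x)$ is irreducible over $\Q$ with roots $\alpha = \frac{k+\sqrt{k^2+4}}{2}$ and $\beta = \frac{k-\sqrt{k^2+4}}{2}$. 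By Theorem~\ref{Thm:Capelli1}, $\FF_p(x) = f(x^p)$ is reducible over $\Q$ if and only if $x^p - \alpha$ is reducible over $\Q(\alpha) = K_0$, the quadratic field $\Q(\sqrt{k^2+4})$.

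Next I would apply Theorem~\ref{Thm:Capelli2} with $c = p$ and the algebraic number $\alpha$. Since $p$ is prime, the only prime dividing $c = p$ is $p$ itself, and the condition $4 \mid c$ never holds (as $p$ is prime, $4 \nmid p$). Therefore $x^p - \alpha$ is reducible over $K_0$ if and only if $\alpha = \gamma^p$ for some $\gamma \in K_0$. Thus irreducibility of $\FF_p(x)$ reduces to showing that $\alpha$ is \emph{not} a $p$-th power in $K_0 = \Q(\sqrt{k^2+4})$.

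To rule out $\alpha = \gamma^p$, the cleanest approach is to use the norm map $N = N_{K_0/\Q}$. Since $\alpha\beta = -1$ (the product of the roots of $f$, from the constant term $-1$), we have $N(\alpha) = \alpha\beta = -1$ when $k^2+4$ is not a square, so $\alpha$ is a unit in $\Z_{K_0}$ (indeed $\alpha$ is an algebraic integer, being a root of the monic $f$, with inverse $-\beta$). If $\alpha = \gamma^p$, then taking norms gives $-1 = N(\alpha) = N(\gamma)^p$, where $N(\gamma) \in \Q$; moreover $\gamma$ is an algebraic integer (a $p$-th root of a unit integer $\alpha$), so $N(\gamma) \in \Z$ and $N(\gamma)^p = -1$ forces $N(\gamma) = -1$ (for $p$ odd) which is consistent, so the norm alone does not immediately finish the argument for odd $p$. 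For $p = 2$ the norm argument does rule it out directly since $N(\gamma)^2 = -1$ has no solution in $\Z$.

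I expect the main obstacle to be the odd-prime case, where the norm is inconclusive and one must instead argue directly that $\alpha$ is not a $p$-th power in the real quadratic field $K_0$. The natural way to finish is to work with the fundamental unit: $\alpha$ is a unit of $\Z_{K_0}$, and one shows $\alpha$ is in fact (up to sign) the fundamental unit, or at least that it is not a $p$-th power of a unit. Concretely, since $\alpha > 1$ and $0 < -\beta = 1/\alpha < 1$ (as $\alpha\beta = -1$ with $\alpha > 0 > \beta$), if $\alpha = \gamma^p$ with $\gamma$ a unit then $\gamma$ is a real unit with $\gamma > 1$ and $\gamma < \alpha$, and one derives a contradiction with $\alpha$ being the smallest unit exceeding $1$ of its form — equivalently one checks directly that the equation $\gamma^p = \alpha = \frac{k+\sqrt{k^2+4}}{2}$ has no solution $\gamma = \frac{a+b\sqrt{k^2+4}}{2}$ with $a,b \in \Z$ of the correct parity, by comparing the expansion of $\gamma^p$ against $\alpha$ (the $\sqrt{k^2+4}$-coefficient of $\gamma^p$ must equal $1/2$, which together with $N(\gamma)^p = -1$ pins down $\gamma$). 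I would carry out this last comparison explicitly to close the case, and this parity/size analysis of units in $\Z_{K_0}$ is where the real work lies.
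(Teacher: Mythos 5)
Your proposal follows essentially the same route as the paper: Capelli's theorems reduce irreducibility of $\FF_p(x)=f(x^p)$ to showing that $\alpha$ is not a $p$-th power in $\Q(\alpha)$, the norm computation $\NN(\gamma)^p=\NN(\alpha)=-1$ disposes of $p=2$ and shows $\gamma$ is a unit, and the remaining odd-$p$ case is settled through the structure of the unit group. The one step you defer as ``where the real work lies'' --- that $\alpha$ is not a $p$-th power of a unit --- is closed in the paper simply by citing Yokoi for the fact that $\alpha$ is the fundamental unit of $\Q(\sqrt{\D})$ (the hypothesis $4\nmid k$ guarantees $k\neq 4$, the relevant exceptional case), whence $\gamma=\pm\alpha^{j}$ and $\alpha=\gamma^{p}$ would give $(\pm 1)^p\alpha^{jp-1}=1$, contradicting that $\alpha$ has infinite order; no explicit parity or size analysis of $\gamma=\tfrac{a+b\sqrt{k^2+4}}{2}$ is needed.
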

\begin{proof}
  Clearly, $f(x)$ is irreducible over $\Q$ since $\D$ is squarefree. Note that $f(\alpha)=0$. Let $h(x)=x^{p}$ so that $\FF_p(x)=f(h(x))$. Assume, by way of contradiction, that $f(h(x))$ is reducible. Then, by Theorems \ref{Thm:Capelli1} and \ref{Thm:Capelli2}, we have that $\alpha=\gamma^p$ for some $\gamma\in \Q(\alpha)$.
Then, we see by taking norms that
  \[\NN(\gamma)^p=\NN(\alpha)=-1,\] which implies that $p\ge 3$ and $\NN(\gamma)=-1$, since $\NN(\gamma)\in \Z$. Thus, $\gamma$ is a unit, and therefore $\gamma=\pm \alpha^j$ for some $j\in \Z$, since, in light of the fact that $k\ne 4$, $\alpha$ is the fundamental unit of $\Q(\sqrt{\D})$  \cite{Y}.
   Consequently,
  \[\alpha=\gamma^p=(\pm 1)^p\alpha^{jp},\] which implies that $(\pm 1)^p\alpha^{jp-1}=1$, contradicting the fact that $\alpha$ has infinite order in the group of units of the ring of algebraic integers in the real quadratic field $\Q(\sqrt{\D})$.
\end{proof}

\begin{comment}
 \begin{lemma}\label{Lem:Basic1}
The polynomial $f(x)$ is monogenic.
  \end{lemma}
\begin{proof}
Recall that $f(x)$ is irreducible over $\Q$, and note that $D=-5$ from \eqref{Eq:D}. To examine the monogenicity of $f(x)$, we use item \ref{JKS:I5} of Theorem \ref{Thm:JKS} with $\theta=\alpha$ and $p=5$. Obviously, $5^2\nmid D$, and we conclude that $\left[\Z_K:\Z[\alpha]\right]\not \equiv 0 \pmod{5}$. Thus, $f(x)$ is monogenic.
 \end{proof}
\end{comment}

\begin{lemma}\label{Lem:Order}
Suppose that $p\ge 3$. Then
     \begin{enumerate}
     \item \label{Per I:0} $\ord_m(\alpha)=\pi(m)$ for $m\in \{p,p^2\}$, % if $\delta_p=-1$ =\ord_m(\beta)
     %\item \label{Per I:1} if $\delta_p=1$, there exists an integer $c$ such that $c^2\equiv 5 \pmod{p^2}$ and $\ord_m(\alpha)=\pi(m)$, where $\alpha=(1+c)/2 %   \pmod{m}$ for $m\in \{p,p^2\}$,% \in $\alpha^{\pi(m)}\equiv \beta^{\pi(m)}\equiv 1 \pmod{m}$ for $m\in \{p,p^2\}$,
     %\item \label{Per I:1.25} $\ord_m(\alpha)=\ord_m(\beta)=\pi(m)$ for $m\in \{p,p^2\}$ if $\delta_p=-1$
     \item \label{Per I:1.5} $\alpha^{p-1}\equiv 1 \pmod{p}$ if $\delta_p=1$,
     \item \label{Per I:2} $\alpha^{p+1}\equiv -1\pmod{p}$ if $\delta_p=-1$.
     \end{enumerate}
\end{lemma}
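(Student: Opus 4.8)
The plan is to work inside the quotient ring $R_m:=\Z[\alpha]/m\Z[\alpha]$, which is a free $\Z/m\Z$-module with basis $\{1,\alpha\}$, since $f(x)=x^2-kx-1$ is monic of degree $2$ and $\{1,\alpha\}$ is a $\Z$-basis of $\Z[\alpha]$. The engine of the whole argument is the identity
\[\alpha^n=U_n\alpha+U_{n-1}\qquad(n\ge 1),\]
which I would establish by an immediate induction using $\alpha^2=k\alpha+1$ and the recurrence \eqref{Eq:Lucas}. Note also that $\alpha$ is a unit in $\Z[\alpha]$, hence in every $R_m$, because $\alpha\beta=-1$ with $\beta=k-\alpha\in\Z[\alpha]$; thus $\ord_m(\alpha)$ is well defined.

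For part (a), since $\{1,\alpha\}$ is a basis of $R_m$, the congruence $\alpha^n\equiv 1\pmod m$ is equivalent, upon comparing coefficients in the identity above, to the simultaneous conditions $U_n\equiv 0$ and $U_{n-1}\equiv 1\pmod m$. On the other hand, $\pi(m)$ is the least $n>0$ for which the state pair $(U_n,U_{n+1})$ returns to its initial value $(U_0,U_1)=(0,1)$ modulo $m$; because the recurrence can be run backwards via $U_{n-1}=U_{n+1}-kU_n$, this is equivalent to $(U_{n-1},U_n)\equiv(1,0)\pmod m$, i.e.\ to $U_{n-1}\equiv 1$ and $U_n\equiv 0$. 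Hence the least $n>0$ realizing $\alpha^n\equiv 1$ coincides with the least $n>0$ realizing the period condition, giving $\ord_m(\alpha)=\pi(m)$ for every $m$, in particular for $m\in\{p,p^2\}$.

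Parts (b) and (c) are then short Frobenius computations modulo $p$, and here I use $p\ge 3$ together with $\delta_p=\pm1$, which forces $p\nmid k^2+4$, so $f(x)$ has distinct roots modulo $p$. If $\delta_p=1$, then $k^2+4$ is a square mod $p$, $f$ splits into distinct linear factors, and $R_p\cong\F_p\times\F_p$; the image of $\alpha$ has both coordinates nonzero (since $\alpha\beta=-1$), so Fermat's little theorem yields $\alpha^{p-1}\equiv 1\pmod p$. If $\delta_p=-1$, then $f$ is irreducible mod $p$ and $R_p\cong\F_{p^2}$; the nontrivial element of $\Gal(\F_{p^2}/\F_p)$ is the $p$-power Frobenius, which must interchange the two conjugate roots, so $\alpha^p\equiv\beta\pmod p$. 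Multiplying by $\alpha$ then gives $\alpha^{p+1}\equiv\alpha\beta=-1\pmod p$.

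The only delicate point is the period--order correspondence in part (a): one must argue carefully that $\pi(m)$, defined through the return of the state pair $(U_n,U_{n+1})$ to $(0,1)$, agrees with the first $n$ for which $\alpha^n\equiv 1$. The reversibility of the recurrence modulo $m$ is precisely what legitimizes the shift from $(U_n,U_{n+1})\equiv(0,1)$ to $(U_{n-1},U_n)\equiv(1,0)$ and thereby aligns the two notions. By contrast, parts (b) and (c) present no real obstacle beyond reading off the correct quotient-ring structure from the value of $\delta_p$.
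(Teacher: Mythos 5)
Your proof is correct, but it takes a genuinely different route from the paper's on both halves of the lemma. For part (a), the paper invokes Robinson's theorem that the order of the companion matrix $\left[\begin{smallmatrix}0&1\\1&k\end{smallmatrix}\right]$ modulo $m$ equals $\pi(m)$, diagonalizes to pass to $\mathrm{diag}(\alpha,\beta)$, and then concludes that \emph{at least one} of $\alpha,\beta$ has order $\pi(m)$, assuming ``without loss of generality'' that it is $\alpha$; your argument via $\alpha^n=U_n\alpha+U_{n-1}$ in $\Z[\alpha]/m\Z[\alpha]$ is self-contained, avoids the external citation, and actually proves the cleaner statement that $\ord_m(\alpha)=\pi(m)$ outright (and symmetrically for $\beta$), with the reversibility of the recurrence correctly handling the shift from $(U_n,U_{n+1})\equiv(0,1)$ to $(U_{n-1},U_n)\equiv(1,0)$. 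For parts (b) and (c), the paper computes $\alpha^{p+1}=\alpha\cdot\alpha^p$ by a binomial expansion modulo $p$ together with Euler's criterion applied to $(\sqrt{k^2+4})^{p}$, arriving at $\alpha\cdot\tfrac{k+\delta_p\sqrt{k^2+4}}{2}$; your identification of $\Z[\alpha]/p$ as $\F_p\times\F_p$ or $\F_{p^2}$ according to $\delta_p$, followed by Fermat's little theorem or the Frobenius swap $\alpha^p\equiv\beta$, is the structural reformulation of the same computation. Each approach has its merits: the paper's is explicit and requires no discussion of the quotient-ring structure, while yours is shorter, conceptually transparent, and makes clear why $p\nmid k^2+4$ (i.e.\ $\delta_p\ne 0$) is exactly what guarantees distinct roots and hence the stated ring isomorphisms.
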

\begin{proof}
  It follows from \cite{Robinson} that the order, modulo an integer $m\ge 3$, of the companion matrix % The characteristic polynomial of $(F_n)$ is $f(x)$, so that
  \[\CC=\left[\begin{array}{cc}
    0&1\\
    1&k
  \end{array}\right]\] for the characteristic polynomial $f(x)$ of $(U_n)$ is $\pi(m)$. Since the eigenvalues of $\CC$ are $\alpha$ and $\beta$, we conclude that
  \[\ord_m\left(\left[\begin{array}{cc}
    \alpha&0\\
    0&\beta
  \end{array}\right]\right)=\ord_m(\CC)=\pi(m), \quad \mbox{for $m\in \{p,p^2\}$.}
  \] It follows that at least one of $\alpha$ and $\beta$ has order $\pi(m)$, and we can assume without loss of generality, that $\ord_{m}(\alpha)=\pi(m)$,
  which establishes \ref{Per I:0}.

For \ref{Per I:1.5} and \ref{Per I:2}, we have by Euler's criterion that
   \[\left(\sqrt{k^2+4}\right)^{p+1}=(k^2+4)^{(p-1)/2}(k^2+4)\equiv \delta_p(k^2+4) \pmod{p},\] which implies that
   $\left(\sqrt{k^2+4}\right)^{p}\equiv \delta_p\sqrt{k^2+4} \pmod{p}$.
   Hence,
   \begin{align*}\label{Eq:Expansion}
     \alpha^{p+1}&=\left(\frac{k+\sqrt{k^2+4}}{2}\right) \left(\frac{k+\sqrt{k^2+4}}{2}\right)^{p}\\
     &=\left(\frac{k+\sqrt{k^2+4}}{2}\right) \sum_{j=0}^p\binom{p}{j}\left(\frac{k}{2}\right)^j\left(\frac{\sqrt{k^2+4}}{2}\right)^{p-j}\\
     &\equiv \left(\frac{k+\sqrt{k^2+4}}{2}\right)\left(\left(\frac{k}{2}\right)^p+\left(\frac{\sqrt{k^2+4}}{2}\right)^{p}\right) \pmod{p}\\
     &\equiv \left(\frac{k+\sqrt{k^2+4}}{2}\right)\left(\frac{k+\delta_p\sqrt{k^2+4}}{2}\right) \pmod{p}\\
     &\equiv \left\{\begin{array}{cl}
      \alpha^2 \pmod{p} & \mbox{if $\delta_p=1$,}\\
     -1 \pmod{p} & \mbox{if $\delta_p=-1$.}
     \end{array} \right.
    \end{align*}
      Since $\alpha\in (\Z/p\Z)^{*}$ when $\delta_p=1$, we note that \ref{Per I:1.5} also follows from Fermat's Little Theorem. %   which completes the proof of the lemma.
 \end{proof}
 %The following corollary is then immediate.
 %\begin{cor}\label{Cor:New1}
 %  Suppose that $p\ge 3$. Then $\alpha^p\not \equiv 1 \pmod{p}$.
 %\end{cor}

\begin{lemma}\label{Lem:New1}
Suppose that $p\ge 3$. Then
\[\FF_p(\beta)\equiv 0 \pmod{p^2} \ \Longleftrightarrow \ \FF_p(\alpha) \equiv 0 \pmod{p^2}.\]
\end{lemma}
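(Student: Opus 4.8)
The plan is to produce an exact algebraic identity in $\Z[\alpha]$ linking $\FF_p(\alpha)$ and $\FF_p(\beta)$, and then to reduce that identity modulo $p^2$, exploiting the fact that $\alpha$ is a unit. First I would record that $\FF_p(x)=f(x^p)$, where $f(x)=x^2-kx-1$, so that $\FF_p(\alpha)=\alpha^{2p}-k\alpha^p-1$ and $\FF_p(\beta)=\beta^{2p}-k\beta^p-1$. Since $\alpha+\beta=k$, we have $\beta=k-\alpha\in\Z[\alpha]$, so both quantities lie in $\Z[\alpha]$ and every congruence below is read in this ring (equivalently, in $\Z_K$). Because $\alpha$ and $\beta$ are the two roots of $f$, their product is $\alpha\beta=-1$; hence $\beta=-\alpha^{-1}$, and since $p\ge 3$ is odd we get $\beta^p=-\alpha^{-p}$.

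Substituting $\beta^p=-\alpha^{-p}$ into the expression for $\FF_p(\beta)$ gives $\FF_p(\beta)=\alpha^{-2p}+k\alpha^{-p}-1$, and multiplying through by $\alpha^{2p}$ then yields the exact identity
\[
\alpha^{2p}\,\FF_p(\beta)=1+k\alpha^{p}-\alpha^{2p}=-\bigl(\alpha^{2p}-k\alpha^{p}-1\bigr)=-\FF_p(\alpha),
\]
valid in $\Z[\alpha]$. The next step is to note that $\alpha$ is a unit of $\Z[\alpha]$: indeed $\alpha^2=k\alpha+1$ forces $\alpha(\alpha-k)=1$, so $\alpha^{-1}=\alpha-k=-\beta$. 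Consequently $\alpha^{2p}$ is a unit of $\Z[\alpha]$ as well.

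Finally I would pass to the quotient ring $\Z[\alpha]/p^2\Z[\alpha]$. A global unit stays invertible in any quotient, so the image of $\alpha^{2p}$ is a unit there; since $-1$ is trivially a unit, the displayed identity shows that $\alpha^{2p}\FF_p(\beta)\equiv 0\pmod{p^2}$ if and only if $\FF_p(\alpha)\equiv 0\pmod{p^2}$, and cancelling the unit $\alpha^{2p}$ gives the desired equivalence $\FF_p(\beta)\equiv 0\pmod{p^2}\Longleftrightarrow\FF_p(\alpha)\equiv 0\pmod{p^2}$. This argument is essentially routine; the only point requiring any care is the observation that a unit of $\Z[\alpha]$ remains invertible in the finite quotient $\Z[\alpha]/p^2\Z[\alpha]$, which is precisely what licenses the cancellation of $\alpha^{2p}$.
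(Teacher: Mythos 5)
Your proof is correct, and it is more direct than the one in the paper, although both ultimately rest on the same underlying fact $\alpha\beta=-1$. The paper first divides the hypothesis $\FF_p(\beta)\equiv 0\pmod{p^2}$ by $\beta^p$ to record the auxiliary congruence $\beta^p-k-\beta^{-p}\equiv 0\pmod{p^2}$, and then runs a chain of eight biconditional congruence manipulations, one of which multiplies by $\alpha^p-k$ and therefore needs the extra input $\alpha^p\not\equiv k\pmod{p}$ drawn from Lemma \ref{Lem:Order}. You instead package everything into the single exact identity $\alpha^{2p}\,\FF_p(\beta)=-\FF_p(\alpha)$ in $\Z[\alpha]$, obtained from $\beta=-\alpha^{-1}$ together with the oddness of $p$ (which gives $\beta^{p}=-\alpha^{-p}$), and then cancel the unit $\alpha^{2p}$ in the quotient $\Z[\alpha]/p^2\Z[\alpha]$. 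This buys several things: it dispenses with the appeal to Lemma \ref{Lem:Order} entirely, it makes transparent exactly where $p\ge 3$ is used (only through the parity of $p$), and it in fact yields the equivalence modulo any modulus, not just $p^2$. The one point that genuinely requires care --- that a unit of $\Z[\alpha]$ remains invertible in the finite quotient, licensing the cancellation --- is one you state explicitly; and your parenthetical identification of $\Z[\alpha]$ with $\Z_K$ is harmless here (and is in fact correct under the standing hypotheses $4\nmid k$ and $\D$ squarefree), since the argument only needs both congruences to be read in the same fixed ring containing $\alpha$ and $\beta$.
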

  \begin{proof}
    Note that if $\FF_p(\beta)=\beta^{2p}-k\beta^p-1\equiv 0 \pmod{p^2}$, then
    \begin{equation}\label{Eq:LemNew1}
    \beta^p-k-\beta^{-p}\equiv 0\pmod{p^2}.
    \end{equation} Since $\alpha\beta\equiv -1 \pmod{p}$, we have that $(\alpha\beta)^p\equiv (-1)^p\equiv -1 \pmod{p^2}$. Thus, since $\alpha^p\not \equiv k \pmod{p}$ from Lemma \ref{Lem:Order}, we have
    \begin{align*}
      \FF_p(\beta)\equiv 0 \pmod{p^2}&\Longleftrightarrow \beta^p(\beta^p-k)\equiv 1 \pmod{p^2}\\
      &\Longleftrightarrow \alpha^p\beta^p (\beta^p-k)\equiv \alpha^p \pmod{p^2}\\
      &\Longleftrightarrow -(\beta^p-k)\equiv \alpha^p \pmod{p^2}\\
      &\Longleftrightarrow -(\alpha^p-k)(\beta^p-k)\equiv \alpha^p(\alpha^p-k) \pmod{p^2}\\
      &\Longleftrightarrow -(\alpha^p\beta^p-k\alpha^p-k\beta^p+k^2)\equiv \alpha^p(\alpha^p-k) \pmod{p^2}\\
      &\Longleftrightarrow 1+k(\alpha^p+\beta^p)-k^2 \equiv \alpha^p(\alpha^p-1) \pmod{p^2}\\
      &\Longleftrightarrow 1+k(-\beta^{-p}+\beta^{p})-k^2\equiv \alpha^p(\alpha^p-1) \pmod{p^2}\\
      &\Longleftrightarrow 1\equiv \alpha^p(\alpha^p-1) \pmod{p^2} \quad \mbox{(from \eqref{Eq:LemNew1})}\\
      &\Longleftrightarrow \FF_p(\alpha)\equiv 0 \pmod{p^2}. \qedhere
    \end{align*}
     \end{proof}
  \begin{lemma}\label{Lem:Main1}
Suppose that $p\ge 3$.
Let $\Z_K$ denote the ring of integers of $K=\Q(\theta)$, where $\FF_p(\theta)=0$. Then
\[\FF_p(\alpha)\equiv 0 \pmod{p^2} \quad \Longleftrightarrow \quad [\Z_K:\Z[\theta]]\equiv 0 \pmod{p}.\]
\end{lemma}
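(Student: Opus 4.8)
The plan is to apply the trinomial index criterion of Theorem \ref{Thm:JKS} to $\FF_p(x)=x^{2p}-kx^p-1$ at the prime $q=p$. Here $N=2p$, $M=p$, $A=-k$, $B=-1$, so $r=\gcd(M,N)=p$, $N_1=2$, $M_1=1$, and \eqref{Eq:D} gives $D=-p^2(k^2+4)$; by Proposition \ref{Prop:Swan}, $p\mid\Delta(\FF_p)$, so the criterion is applicable. Since $p\mid M$ and $p\nmid B=-1$, only item \ref{JKS:I2} (when $p\mid k$) or item \ref{JKS:I4} (when $p\nmid k$) can decide whether $p\mid[\Z_K:\Z[\theta]]$. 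I would split along these two cases and, in each, match the criterion's condition to the congruence $\FF_p(\alpha)\equiv 0\pmod{p^2}$.

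In the principal case $p\nmid k$ (item \ref{JKS:I4}) one computes $G(x)=x^2-kx-1=f(x)$ and $H(x)=\bigl((kx+1)^p-kx^p-1\bigr)/p\in\Z[x]$. The engine of the argument is the identity obtained by putting $x=\alpha$ and using $k\alpha+1=\alpha^2$:
\[
pH(\alpha)=(k\alpha+1)^p-k\alpha^p-1=\alpha^{2p}-k\alpha^p-1=\FF_p(\alpha).
\]
Thus $\FF_p(\alpha)\equiv 0\pmod{p^2}$ is equivalent to $H(\alpha)\equiv 0\pmod p$, i.e.\ to $\bar\alpha$ being a common root of $\bar f$ and $\bar H$ over $\overline{\F}_p$, while item \ref{JKS:I4} declares that $p\mid[\Z_K:\Z[\theta]]$ exactly when $\bar f$ and $\bar H$ fail to be coprime modulo $p$. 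The subtle point is that, when $\delta_p=1$ and $p$ splits, coprimality of $\bar f,\bar H$ only requires that \emph{one} of the two roots of $\bar f$ be a root of $\bar H$, whereas $H(\alpha)\equiv 0\pmod p$ forces \emph{both}. I would bridge this gap with the reciprocal relation $\FF_p(\beta)=-\alpha^{-2p}\FF_p(\alpha)$ (a refinement of Lemma \ref{Lem:New1}, coming from $\beta=-\alpha^{-1}$): cancelling $p$ gives $H(\beta)=-\alpha^{-2p}H(\alpha)$, and since $\alpha$ is a unit this shows $\bar H$ vanishes at one root of $\bar f$ precisely when it vanishes at the other. For $\delta_p=-1$ the polynomial $\bar f$ is irreducible and the two conditions coincide automatically.

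In the remaining case $p\mid k$ (item \ref{JKS:I2}, which forces $\delta_p=1$ since then $k^2+4\equiv 4\pmod p$) I would compute $A_2=-k/p$ and $B_1=(B+(-B)^p)/p=0$ (as $p\parallel N$), so the listed condition collapses to $p\nmid A_2^{3}$, giving $p\mid[\Z_K:\Z[\theta]]\iff p^2\mid k$. On the arithmetic side, $\alpha^2=1+k\alpha\equiv 1\pmod p$ yields $\alpha^{2p}\equiv 1$ and $\alpha^{p}\equiv\alpha+\tfrac{p-1}{2}k\pmod{p^2}$, whence $\FF_p(\alpha)\equiv -k\alpha\pmod{p^2}$; since $\alpha$ is a unit this vanishes modulo $p^2$ exactly when $p^2\mid k$, matching the criterion.

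The genuinely separate sub-case, which I expect to be the main obstacle, is $p\mid k^2+4$ (so $p\nmid k$ and $\bar f=(x-c)^2$ is ramified), since then coprimality concerns a repeated root and the split-case symmetry above no longer applies verbatim. Here I would exploit the factorization $\FF_p(\alpha)=(\alpha^p-\alpha)(\alpha^p-\beta)$ together with $\alpha-\beta=\sqrt{k^2+4}$. Writing $\p$ for the ramified prime and $v_{\p}$ for its valuation, one has $v_{\p}(p)=2$ and $v_{\p}(\alpha-\beta)=1$; expanding $\alpha^p$ around the residue $c=k/2$ shows $v_{\p}(\alpha^p-\alpha)=v_{\p}(\alpha^p-\beta)=1$, hence $v_{\p}(\FF_p(\alpha))=2$. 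Because $v_{\p}(p^2)=4$ and $v_{\p}(pH(\alpha))=2+v_{\p}(H(\alpha))$, this simultaneously forces $\FF_p(\alpha)\not\equiv 0\pmod{p^2}$ and $\bar H(c)\neq 0$ (so $p\nmid[\Z_K:\Z[\theta]]$): both sides of the asserted equivalence are false, and the same valuation count establishes the monogenicity of $\FF_p$ at $p$ claimed in the ``Furthermore'' part of Theorem \ref{Thm:Main}.
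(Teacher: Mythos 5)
Your proposal is correct, and its engine is the same as the paper's: item \ref{JKS:I4} of Theorem \ref{Thm:JKS} applied with $G=f$ and $H(x)=\bigl((kx+1)^p-kx^p-1\bigr)/p$, linked to the congruence $\FF_p(\alpha)\equiv 0\pmod{p^2}$ via $k\alpha+1=\alpha^2$. Where you diverge is in the level of care, and the divergences are substantive. First, your exact identities $pH(\alpha)=\FF_p(\alpha)$ and $\FF_p(\beta)=-\alpha^{-2p}\FF_p(\alpha)$ replace, respectively, the paper's two-step congruence manipulation through \eqref{Eq:alpha} and the entire chain of equivalences in Lemma \ref{Lem:New1}; your split-case ``bridge'' is exactly the role Lemma \ref{Lem:New1} plays in the paper's ``without loss of generality'' step, but made precise at the level of individual roots of $\bar f$. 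Second, you treat the case $p\mid k$ separately via item \ref{JKS:I2}; the paper's proof of this lemma invokes item \ref{JKS:I4} without checking its hypothesis $q\nmid AB$, so it silently omits this case (which is nonvacuous: e.g.\ $k=9$, $p=3$ satisfies all standing hypotheses), and your computation $\FF_p(\alpha)\equiv -k\alpha\pmod{p^2}$ versus $p\mid[\Z_K:\Z[\theta]]\iff p^2\mid k$ is the correct reconciliation. Third, in the ramified case $p\mid k^2+4$ the paper's reduction from ``$\bar G,\bar H$ not coprime'' to ``$pH(\alpha)\equiv 0\pmod{p^2}$'' is not automatic, since the former only gives $v_{\p}(H(\alpha))\ge 1$ while the latter needs $v_{\p}(H(\alpha))\ge 2$; your valuation count $v_{\p}(\FF_p(\alpha))=2$ (which rests on $c^p\equiv c\pmod p$ by Fermat, so that the term $\pm\tfrac{1}{2}\sqrt{k^2+4}$ of valuation $1$ dominates in $\alpha^p-\alpha$ and $\alpha^p-\beta$ --- worth spelling out) shows both sides of the equivalence are false there, closing that gap and incidentally reproving what the paper defers to Lemma \ref{Lem:Main2} and Lemma \ref{Lem:HJ}. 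In short: same route, but your version is tighter and covers two boundary cases the paper's proof of this particular lemma glosses over.
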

\begin{proof}
Since $f(\alpha)=\alpha^2-k\alpha-1=0$, we note that $\alpha^2\equiv k\alpha+1 \pmod{p}$, which implies that
\begin{equation}\label{Eq:alpha}
\alpha^{2p}\equiv (k\alpha+1)^p \pmod{p^2}.
\end{equation}

  Suppose first that $\FF_p(\alpha)=\alpha^{2p}-k\alpha^p-1\equiv 0 \pmod{p^2}$. Observe then that
  \begin{equation}\label{Eq:second2obs}
   -k\alpha^p-1\equiv -\alpha^{2p} \pmod{p^2}.
  \end{equation}
   Let
  \[G(x)=f(x)=x^2-kx-1 \quad \mbox{and}\quad H(x)=\dfrac{-kx^p-1+(kx+1)^p}{p}.\] Hence, $G(\alpha)\equiv 0 \pmod{p}$ and
  \begin{align*}
    pH(\alpha)&=-k\alpha^p-1+(k\alpha+1)^p\\
         &\equiv -\alpha^{2p}+(k\alpha+1)^p \pmod{p^2} \quad \mbox{(from \eqref{Eq:second2obs})}\\
         &\equiv -\alpha^{2p}+\alpha^{2p}\pmod{p^2} \quad \mbox{(from \eqref{Eq:alpha})}\\
         &\equiv 0 \pmod{p^2}.
  \end{align*} Thus, $G(x)$ and $H(x)$ are not coprime modulo $p$ so that $[\Z_K:\Z[\theta]]\equiv 0 \pmod{p}$ by item \ref{JKS:I4} of Theorem \ref{Thm:JKS}.

  Conversely, suppose that $[\Z_K:\Z[\theta]]\equiv 0 \pmod{p}$. Then, we have by item \ref{JKS:I4} of Theorem \ref{Thm:JKS} that $G(x)$ and $H(x)$ are not coprime modulo $p$. In light of Lemma \ref{Lem:New1}, we assume then, without loss of generality, that
  \begin{equation}\label{Eq:alphaH}
  pH(\alpha)=-k\alpha^p-1+(k\alpha+1)^p \equiv 0 \pmod{p^2}.
  \end{equation}
  Hence,
  \begin{align*}
    \FF_p(\alpha)&=\alpha^{2p}-k\alpha^p-1\\
    &\equiv (k\alpha+1)^p-k\alpha^p-1 \quad \mbox{(from \eqref{Eq:alpha})}\\
    &\equiv (k\alpha^p+1)-k\alpha^p-1 \pmod{p^2} \quad \mbox{(from \eqref{Eq:alphaH})}\\
    &\equiv  0\pmod{p^2},
  \end{align*}
  which completes the proof.
\end{proof}

\begin{lemma}\label{Lem:Main2}
  %Let $p\ge 3$ be a prime such that $\delta_p=-1$. Then
  Suppose that $p\ge 3$. Then %The following conditions are equivalent:
  \[\mbox{$p$ is a $k$-Wall-Sun-Sun prime} \quad \Longleftrightarrow \quad \FF_p(\alpha) \equiv 0\pmod{p^2}.\]
  %\begin{enumerate}
  %  \item \label{Lem:Main1 I1} $p$ is a Wall-Sun-Sun prime,
  %  \item \label{Lem:Main1 I2} $\FF_p(\alpha) \equiv 0\pmod{p^2}$.
    %\item \label{Lem:Main1 I3} $f(\alpha^{p^m}) \equiv 0\pmod{p^2}$ for some integer $m\ge 1$.
  %\end{enumerate}
  \end{lemma}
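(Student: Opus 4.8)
The plan is to write $\FF_p(\alpha)=f(\alpha^p)=(\alpha^p-\alpha)(\alpha^p-\beta)$, using $f(x)=(x-\alpha)(x-\beta)$, and to translate the $k$-Wall-Sun-Sun condition into a congruence on $\alpha^p$ modulo $p^2$ inside the ring of integers $\O$ of $\Q(\sqrt{\D})$. By Lemma \ref{Lem:Order}\ref{Per I:0} we have $\pi(m)=\ord_m(\alpha)$ for $m\in\{p,p^2\}$, so $p$ is a $k$-Wall-Sun-Sun prime if and only if $\ord_{p^2}(\alpha)=\ord_p(\alpha)$. Writing $d=\ord_p(\alpha)=\pi(p)$, Theorem \ref{Thm:Period}\ref{R:I.5} gives $\ord_{p^2}(\alpha)\in\{d,pd\}$; since $\alpha^d\equiv 1\pmod{p^2}$ forces $\ord_{p^2}(\alpha)\mid d$ and hence excludes $pd$, we conclude that $p$ is a $k$-Wall-Sun-Sun prime if and only if $\alpha^d\equiv 1\pmod{p^2}$. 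I then split on the value of $\delta_p$, noting that $\delta_p=\pm1$ corresponds to $p\nmid k^2+4$ and $\delta_p=0$ to $p\mid k^2+4$.

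When $\delta_p=1$, Theorem \ref{Thm:Period}\ref{R:I2} gives $d\mid p-1$, so $p$ is a $k$-Wall-Sun-Sun prime if and only if $\alpha^{p-1}\equiv1\pmod{p^2}$: the forward direction is immediate from $d\mid p-1$, and conversely $\alpha^{p-1}\equiv1\pmod{p^2}$ forces $\ord_{p^2}(\alpha)\mid p-1$, which rules out $pd$ (as $p\mid pd$ but $p\nmid p-1$) and leaves $\ord_{p^2}(\alpha)=d$. As $\alpha$ is a unit, $\alpha^{p-1}\equiv1\pmod{p^2}$ is equivalent to $\alpha^p-\alpha\equiv0\pmod{p^2}$. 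Finally, Lemma \ref{Lem:Order}\ref{Per I:1.5} gives $\alpha^p\equiv\alpha\pmod p$, so $\alpha^p-\beta\equiv\alpha-\beta=\sqrt{k^2+4}\pmod p$, a unit modulo $p$ because $p\nmid k^2+4$; hence $\FF_p(\alpha)=(\alpha^p-\alpha)(\alpha^p-\beta)\equiv0\pmod{p^2}$ if and only if $\alpha^p-\alpha\equiv0\pmod{p^2}$, which closes this case.

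When $\delta_p=-1$, Theorem \ref{Thm:Period}\ref{R:I3} gives $d\mid 2(p+1)$, and I claim $p$ is a $k$-Wall-Sun-Sun prime if and only if $\alpha^{p+1}\equiv-1\pmod{p^2}$. If $\alpha^d\equiv1\pmod{p^2}$, then $\alpha^{2(p+1)}\equiv1\pmod{p^2}$; setting $\gamma=\alpha^{p+1}$, Lemma \ref{Lem:Order}\ref{Per I:2} gives $\gamma\equiv-1\pmod p$, so $\gamma-1$ is a unit modulo $p$ (here $p\ge3$), and $(\gamma-1)(\gamma+1)\equiv0\pmod{p^2}$ yields $\gamma\equiv-1\pmod{p^2}$. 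Conversely, $\alpha^{p+1}\equiv-1\pmod{p^2}$ gives $\ord_{p^2}(\alpha)\mid2(p+1)$, again excluding $pd$ and forcing $\ord_{p^2}(\alpha)=d$. Since $\alpha\beta=-1$, the congruence $\alpha^{p+1}\equiv-1\pmod{p^2}$ is equivalent to $\alpha^p\equiv\beta\pmod{p^2}$; and Lemma \ref{Lem:Order}\ref{Per I:2} gives $\alpha^p\equiv\beta\pmod p$, so $\alpha^p-\alpha\equiv\beta-\alpha=-\sqrt{k^2+4}\pmod p$ is a unit, whence $\FF_p(\alpha)\equiv0\pmod{p^2}$ if and only if $\alpha^p-\beta\equiv0\pmod{p^2}$.

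It remains to treat $\delta_p=0$, i.e. $p\mid k^2+4$; this is the ramified case and the main obstacle, since Lemma \ref{Lem:Order}\ref{Per I:1.5}--\ref{Per I:2} no longer applies. Here both sides of the asserted equivalence should be \emph{false}: Lemma \ref{Lem:HJ} shows such a $p\ge3$ is not a $k$-Wall-Sun-Sun prime, so I only need $\FF_p(\alpha)\not\equiv0\pmod{p^2}$. Writing $p\O=\p^2$ and $s=\sqrt{k^2+4}$, squarefreeness of $\D$ (together with $p\nmid 2$) gives $v_\p(s)=1$ and $v_\p(p)=2$; expanding $(2\alpha)^p=(k+s)^p$ by the binomial theorem and discarding all terms of $\p$-valuation at least $2$ yields $\alpha^p\equiv(k/2)^p\pmod p$, and then, using $p\nmid k$ and Fermat's little theorem to get $(k/2)^p-k/2\equiv0\pmod p$, one finds $\alpha^p-\alpha\equiv -s/2$ and $\alpha^p-\beta\equiv s/2$ modulo $\p^2$. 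Hence $v_\p(\FF_p(\alpha))=1+1=2<4=v_\p(p^2)$, so $\FF_p(\alpha)\not\equiv0\pmod{p^2}$, as required. The genuinely delicate points are the two order-lifting equivalences, where one must pin down $\ord_{p^2}(\alpha)$ using the dichotomy of Theorem \ref{Thm:Period}\ref{R:I.5}, and this final $\p$-adic valuation estimate in the ramified case.
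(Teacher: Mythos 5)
Your proof is correct, and its skeleton is the same as the paper's: split on $\delta_p\in\{0,1,-1\}$, use Lemma \ref{Lem:Order}\ref{Per I:0} to identify $\pi(m)$ with $\ord_m(\alpha)$, and exploit the dichotomy $\pi(p^2)\in\{\pi(p),p\pi(p)\}$ together with $p\nmid p\mp 1$ (resp.\ $p\nmid 2(p+1)$) to pin down the order modulo $p^2$. Where you genuinely diverge is in the device used to pass between the order condition and $\FF_p(\alpha)\equiv 0\pmod{p^2}$: you factor $\FF_p(\alpha)=f(\alpha^p)=(\alpha^p-\alpha)(\alpha^p-\beta)$ and observe that in each unramified case exactly one factor is a unit modulo $p$ (being $\equiv\pm\sqrt{k^2+4}$), so the congruence modulo $p^2$ localizes to the other factor. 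The paper instead argues ad hoc in each case: for $\delta_p=1$ it computes the Hensel lift $\widehat{\alpha}=(\alpha^2+1)/(2\alpha-k)$ and divides by $\alpha$; for $\delta_p=-1$ it invokes the fact that $\alpha$ and $\beta$ are the only zeros of $f$ in $(\Z/p^2\Z)[\sqrt{\D}]$ and then eliminates the possibility $\alpha^p\equiv\alpha$. Your uniform factorization subsumes both of these steps and avoids having to enumerate roots of $f$ modulo $p^2$. In the ramified case $\delta_p=0$ the difference is sharpest: the paper derives $k^{p+1}\equiv -2^{p+1}\pmod{p^2}$ from the hypothetical vanishing and squares it to contradict the squarefreeness of $\D$, whereas you compute $v_\p(\alpha^p-\alpha)=v_\p(\alpha^p-\beta)=1$ directly (both reduce to $\mp\sqrt{k^2+4}/2$ modulo $\p^2$ after Fermat's little theorem kills the rational part), giving $v_\p(\FF_p(\alpha))=2<4$. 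Both routes use squarefreeness of $\D$ in the same place, namely to guarantee $v_\p(\sqrt{k^2+4})=1$; your valuation-theoretic version is arguably more transparent about why the statement fails to be divisible by $p^2$ but not by $p$.
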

\begin{proof}
We consider the three cases: $\delta_p\in \{0,-1,1\}$.

Suppose first that $\delta_p=0$. Then $k^2+4 \equiv 0 \pmod{p}$, so that $\alpha \equiv k/2 \pmod{p}$ and $(k/2)^2\equiv -1\pmod{p}$. Hence, $(k/2)^{2p}\equiv -1 \pmod{p^2}$ or, equivalently,
 \begin{equation}\label{Eq:equiv}
 k^{2p}\equiv -2^{2p}\pmod{p^2}.
 \end{equation} By Lemma \ref{Lem:HJ}, we have that $p$ is not a $k$-Wall-Sun-Sun prime. We must show that $\FF_p(\alpha)\not \equiv 0\pmod{p^2}$. Assume, by way of contradiction, that
 \[\FF_p(\alpha)\equiv (k/2)^{2p}-k(k/2)^p-1\equiv -1-k(k/2)^p-1\equiv 0 \pmod{p^2}.\] Thus,
 \begin{equation}\label{Eq:kp+1}
 k^{p+1}\equiv -2^{p+1}\pmod{p^2}.
 \end{equation} Squaring both sides of \eqref{Eq:kp+1} yields
 \begin{equation}\label{Eq:equiv1}
 k^2(k^{2p})\equiv -4(-2^{2p})\pmod{p^2}.
 \end{equation} Note that $p\nmid k$ since $p\ge 3$. Therefore, $k^2+4\equiv 0 \pmod{p^2}$ %noting that $p\nmid k$ since $p\ge 3$,
  from \eqref{Eq:equiv} and \eqref{Eq:equiv1}, which contradicts the fact that $\D$ is squarefree, and completes the proof when $\delta_p=0$.

Suppose next that $\delta_p=-1$. Assume first that $p$ is a $k$-Wall-Sun-Sun prime.
%We define
  %\[\pi:=\pi(p^2)=\pi(p).\]
      Then, since $\pi(p^2)=\pi(p)$, we conclude from item \ref{R:I3} of Theorem \ref{Thm:Period}, and items \ref{Per I:0} and \ref{Per I:2} of Lemma \ref{Lem:Order} that
       \begin{equation}\label{Eq:modp^2}
       (\alpha^{p+1}-1)(\alpha^{p+1}+1)\equiv \alpha^{2(p+1)}-1\equiv 0 \pmod{p^2}.
       \end{equation} Note that $\alpha^{p+1}-1\not \equiv 0\pmod{p}$ since $\alpha^{p+1}+1\equiv 0\pmod{p}$ from item \ref{Per I:2} of Lemma \ref{Lem:Order}. Therefore, we see from \eqref{Eq:modp^2} that $\alpha^{p+1}+1\equiv 0\pmod{p^2}$, or equivalently, that $\alpha^p\equiv -\alpha^{-1} \pmod{p^2}$. Hence,
       \[\FF_p(\alpha)=\alpha^{2p}-k\alpha^p-1\equiv \alpha^{-2}+k\alpha^{-1}-1\equiv -\frac{\alpha^2-k\alpha-1}{\alpha^2}\equiv 0 \pmod{p^2}.\]

       Conversely, assume that $\FF_p(\alpha)\equiv 0 \pmod{p^2}$.
       %By item \ref{R:I.5} of Theorem \ref{Thm:Period} and item \ref{Per I:1} of Lemma \ref{Lem:Order}, we have that $\alpha^{p\pi(p)}\equiv 1 \pmod{\pi(p^2)}$. Then, by item \ref{R:I3} of Theorem \ref{Thm:Period}, we deduce that \[\alpha^{2p(p+1)}-1\equiv (\alpha^{p(p+1)}-1)(\alpha^{p(p+1)}+1)\equiv 0 \pmod{p^2}.\] By Hensel, the only zeros of $f(x)$ in $(\Z/p^2\Z)[\sqrt{\D}]$ are $\alpha$ and $\beta=-\alpha^{-1}$.
       Since $\delta_p=-1$, we have that $f(x)$ is irreducible modulo $p$. Consequently, the only zeros of $f(x)$ in $(\Z/p^2\Z)[\sqrt{\D}]$ are $\alpha$ and $\beta=-\alpha^{-1}$.
Hence,
\[\mbox{either} \quad \alpha^p\equiv \alpha \pmod{p^2} \quad \mbox{or}\quad \alpha^p\equiv \beta\pmod{p^2}.\]
If $\alpha^p\equiv \alpha \pmod{p^2}$, then, from item \ref{Per I:2} of Lemma \ref{Lem:Order}, we have that
\[\frac{k^2+2+k\sqrt{k^2+4}}{2}= \alpha^2+1\equiv \alpha^{p+1}+1\equiv 0 \pmod{p},\]
which implies that $k^2+2\equiv 0 \pmod{p}$, and either $p\mid k$ or $k^2+4\equiv 0 \pmod{p}$. In either case, we arrive at the contradiction that $p=2$.  Hence,
\[\alpha^p\equiv \beta\equiv -\alpha^{-1}\pmod{p^2}\quad \mbox{or equivalently,} \quad  \alpha^{p+1}\equiv -1\pmod{p^2}.\]
%or equivalently,
%\begin{equation}\label{Eq:Condition 1}
%  \alpha^{p+1}\equiv -1\pmod{p^2}. %\quad \mbox{and} \quad \beta^p\equiv \alpha \pmod{p^2}.
%\end{equation}
Thus, $\alpha^{2(p+1)}\equiv 1 \pmod{p^2}$ so that
\begin{equation}\label{Eq:orderdiv}
2(p+1)\equiv 0 \pmod{\ord_{p^2}(\alpha)}.
\end{equation} By item \ref{Per I:0} of Lemma \ref{Lem:Order} and item \ref{R:I.5} of Theorem \ref{Thm:Period}, we have that
\[\ord_{p^2}(\alpha)=\pi(p^2)\in \{\pi(p),p\pi(p)\}.\]
Therefore, we see that $\pi(p^2)=p\pi(p)$ is impossible since $p^2-1\not \equiv 0 \pmod{p}$. Consequently, $\pi(p^2)=\pi(p)$, which implies that $p$ is a $k$-Wall-Sun-Sun prime.

Finally, suppose that $\delta_p=1$. Assume first that $p$ is a $k$-Wall-Sun-Sun prime. Since $\pi(p^2)=\pi(p)$, it follows from item \ref{R:I2} of Theorem \ref{Thm:Period}, and items \ref{Per I:0} and  \ref{Per I:1.5} of Lemma \ref{Lem:Order} that
\[\alpha^{p-1}\equiv 1 \pmod{p^2}\quad \mbox{or equivalently,} \quad \alpha^p\equiv \alpha \pmod{p^2}.\]
Thus, since $f(\alpha)=\alpha^2-k\alpha-1=0$, we have that
\[\FF_p(\alpha)=\alpha^{2p}-k\alpha^p-1\equiv \alpha^2-k\alpha-1\equiv 0\pmod{p^2}.\]

Conversely, assume that $\FF_p(\alpha)\equiv 0 \pmod{p^2}$. Since $f(\alpha)=\alpha^2-k\alpha-1=0$, we have that
\begin{equation}\label{Eq:frac}
\alpha+1/\alpha=2\alpha-k.
\end{equation} Additionally, note that
\[\widehat{\alpha}=\alpha-\frac{f(\alpha)}{f'(\alpha)}=\alpha-\frac{\alpha^2-k\alpha-1}{2\alpha-k}=\frac{\alpha^2+1}{2\alpha-k}\]
 is the Hensel lift modulo $p^2$ of $\alpha$, so that $f(\widehat{\alpha})\equiv 0 \pmod{p^2}$. Then, since
 \[\FF_p(\alpha)=(\alpha^p)^2-k(\alpha^p)-1\equiv 0 \pmod{p^2},\] it follows that
\[\alpha^p\equiv \frac{\alpha^2+1}{2\alpha-k}\pmod{p^2},\] which implies that
\[\alpha^{p-1}\equiv \frac{\alpha+1/\alpha}{2\alpha-k}\equiv 1 \pmod{p^2},\]
from \eqref{Eq:frac}. Hence, $p-1\equiv 0 \pmod{\ord_{p^2}(\alpha)}$. By item \ref{Per I:0} of Lemma \ref{Lem:Order} and item \ref{R:I.5} of Theorem \ref{Thm:Period}, we have that
\[\ord_{p^2}(\alpha)=\pi(p^2)\in \{\pi(p),p\pi(p)\}.\]
Therefore, we see that $\pi(p^2)=p\pi(p)$ is impossible since $p-1\not \equiv 0 \pmod{p}$. Consequently, $\pi(p^2)=\pi(p)$, which implies that $p$ is a $k$-Wall-Sun-Sun prime.
% As before, we conclude that $\pi(p^2)=\pi(p)$ since $p-1\not \equiv 0 \pmod{p}$. Hence, $p$ is a Wall-Sun-Sun prime.
\end{proof}

Combining Lemma \ref{Lem:Main1} and Lemma \ref{Lem:Main2} yields the following.
\begin{lemma}\label{Lem:Main3}
Suppose that $p\ge 3$.
Let $\Z_K$ denote the ring of integers of $K=\Q(\theta)$, where $\FF_p(\theta)=0$. Then
\[\mbox{$p$ is a $k$-Wall-Sun-Sun prime} \quad \Longleftrightarrow \quad [\Z_K:\Z[\theta]]\equiv 0 \pmod{p}.\]
\end{lemma}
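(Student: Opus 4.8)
The plan is to prove Lemma \ref{Lem:Main3} purely by transitivity, since the two immediately preceding lemmas have been arranged to share a common pivot condition. The key observation is that Lemma \ref{Lem:Main2} is a biconditional whose right-hand side is the congruence $\FF_p(\alpha)\equiv 0 \pmod{p^2}$, while Lemma \ref{Lem:Main1} is a biconditional whose left-hand side is \emph{exactly} that same congruence. So the first thing I would check is that the hypotheses align: both lemmas assume $p\ge 3$, which is precisely the standing hypothesis of Lemma \ref{Lem:Main3}, and both refer to the same element $\alpha=(k+\sqrt{k^2+4})/2$, the same polynomial $\FF_p$, and the same modulus $p^2$. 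Once the hypotheses are matched, the two equivalences compose without any intermediate reformulation.

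Concretely, Lemma \ref{Lem:Main2} supplies
\[ p \text{ is a } k\text{-Wall-Sun-Sun prime} \iff \FF_p(\alpha)\equiv 0 \pmod{p^2}, \]
and Lemma \ref{Lem:Main1} supplies
\[ \FF_p(\alpha)\equiv 0 \pmod{p^2} \iff [\Z_K:\Z[\theta]]\equiv 0 \pmod{p}. \]
Chaining these two biconditionals through their common middle term yields the desired equivalence between $p$ being a $k$-Wall-Sun-Sun prime and $p \mid [\Z_K:\Z[\theta]]$, which completes the proof.

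I do not expect a genuine obstacle here, because all of the arithmetic content has already been absorbed into the two component lemmas: the index-theoretic half comes from Lemma \ref{Lem:Main1} via item \ref{JKS:I4} of the Dedekind-type criterion in Theorem \ref{Thm:JKS} (with the $\alpha$-versus-$\beta$ ambiguity in the coprimality test already handled by Lemma \ref{Lem:New1}), while the period/order half comes from Lemma \ref{Lem:Main2} via the relations of Theorem \ref{Thm:Period} and Lemma \ref{Lem:Order} together with the case analysis on $\delta_p\in\{0,\pm 1\}$. Thus the only thing requiring care is confirming that the pivot statement is literally identical in the two lemmas, so that transitivity applies directly; granting this, Lemma \ref{Lem:Main3} is a formal corollary and I expect its proof to be a single sentence.
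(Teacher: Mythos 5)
Your proposal is correct and matches the paper exactly: the paper states Lemma \ref{Lem:Main3} as the immediate consequence of combining Lemma \ref{Lem:Main1} and Lemma \ref{Lem:Main2} through their common pivot $\FF_p(\alpha)\equiv 0 \pmod{p^2}$, with no further argument. Your verification that the hypotheses ($p\ge 3$, same $\alpha$, same modulus) align is the only substantive check needed, and it holds.
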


%\begin{lemma}\label{Lem:Main4}
%  Suppose that $p\ge 3$, with $p\ne 5$. Let $\Z_K$ denote the ring of integers of $K=\Q(\theta)$, where $\FF_p(\theta)=0$. Then
%  $[\Z_K:\Z[\theta]]\not \equiv 0 \pmod{5}$.
%\end{lemma}

We are now in a position to provide a proof of the main result.
 \begin{proof}[Proof of Theorem \ref{Thm:Main}] We first investigate the monogenicity of $\FF_p(x)$. Let $\Z_K$ denote the ring of integers of $K=\Q(\theta)$, where $\FF_p(\theta)=0$.  Recall from Proposition \ref{Prop:Swan} that
 \[\Delta(\FF_p)=(-1)^{(p+1)(2p-1)}p^{2p}(k^2+4)^p.\]
 Let $q\ne p$ be a prime divisor of $\Delta(\FF_p)$. Then $k^2+4\equiv 0 \pmod{q}$.
  Suppose first that $q\ge 3$. Then $q\nmid kp$, and we use item  \ref{JKS:I5} of Theorem \ref{Thm:JKS} to address $q$. Since $\D$ is squarefree, we deduce that $q^2\nmid D/p^2$, and therefore, $[\Z_K:\Z[\theta]]\not \equiv 0 \pmod{q}$. Suppose next that $q=2$. Then $2\mid k$, and we use item \ref{JKS:I2} of Theorem \ref{Thm:JKS} to address $q$. Since $B_1=0$, the first condition fails. However, since $4\nmid k$, we see that $2\nmid A_2$, and so the second condition is satisfied. Hence,  $[\Z_K:\Z[\theta]]\not \equiv 0 \pmod{2}$.

  Thus, we have shown that the monogenicity of $\FF_p(x)$ is completely determined by the prime $p$. More explicitly, we have that
  \[\FF_p(x) \mbox{ is monogenic}\ \Longleftrightarrow \ [\Z_K:\Z[\theta]]\not \equiv 0 \pmod{p}.\] Consequently, if $p\ge 3$, then the theorem follows from Lemma \ref{Lem:Main3}.

  We now address the case $p=2$. Recall that $4\nmid k$. We examine the two subcases: $k\equiv 2 \pmod{4}$ and $k\equiv 1 \pmod{2}$.

  %Lemma \ref{Lem:HJ},
  If $k\equiv 2\pmod{4}$, then $k^2+4 \equiv 0 \pmod{2}$ and $p=2$ is not a $k$-Wall-Sun-Sun prime by Lemma \ref{Lem:HJ}.  Since $2\mid k$, we apply item \ref{JKS:I2} of Theorem \ref{Thm:JKS}, and use the same argument as used above, to deduce that $[\Z_K:\Z[\theta]]\not \equiv 0 \pmod{2}$. Therefore, the theorem is established when $p=2$ and $k\equiv 2\pmod{4}$.

   If $k\equiv 1 \pmod{2}$, then straightforward computations reveal that $\pi(4)=6$ and $\pi(2)=3$.
   Hence, $p=2$ is not a $k$-Wall-Sun-Sun prime in this subcase as well, and we must show that $\FF_2(x)$ is monogenic.
 We use item \ref{JKS:I4} of Theorem \ref{Thm:JKS} with $q=p=2$ to see that
  \[G(x)=x^2-kx-1 \quad \mbox{and} \quad H(x)=\frac{-kx^2-1+(kx+1)^2}{2}=kx\left(\frac{k-1}{2}x+1\right).\] Since $G(x)$ is irreducible in $\F_2[x]$, it follows that $G(x)$ and $H(x)$ are coprime in $\F_2[x]$. Hence, $\FF_2(x)$ is monogenic in this case, which completes the proof of the main statement of the theorem. 
  
  Furthermore, it then follows immediately from Lemma \ref{Lem:HJ} that $\FF_p(x)$ is monogenic if $p$ is a prime divisor of $k^2+4$. 
    \end{proof}

%We have the following consequence of Lemma \ref{Lem:HJ} and Theorem \ref{Thm:Main}. Recall that $k\ge 1$ is an integer such that $k\not \equiv 0 \pmod{4}$ and %$\D$, as defined in \eqref{Eq:DD}, is squarefree.
%\begin{thm}\label{Thm:Mono}
%  If $p$ is a prime divisor of $k^2+4$, then $\FF_p(x)$ is monogenic.
%\end{thm}

%\section*{Acknowledgments}
%The author confirms that there is no conflict of interest, and that all relevant data are included in the article.
%\section*{Data Availability Statement}
%The author confirms that all relevant data are included in the article.

\end{document}